\author{  Michael~Grabchak\footnote{Department of Mathematics and Statistics, University of North Carolina at Charlotte. E-mail:
   mgrabcha@uncc.edu}
 }
\title{Limit Theorems For Sequences of Tempered Stable and Related Distributions}
\newtheorem{proposition}{Proposition}[section]
\newtheorem{theorem}[proposition]{Theorem}
\newtheorem{definition}[proposition]{Definition}
\newtheorem{corollary}[proposition]{Corollary}
\newtheorem{lemma}[proposition]{Lemma}
\newtheorem{remark}[proposition]{Remark}
\numberwithin{equation}{section}
\newcommand{\rd}{\mathrm d}
\newcommand{\ts}{TS^p_\alpha}
\newcommand{\ets}{ETS^p_\alpha}
\newcommand{\tr}{\mathrm{tr}}
\newcommand{\conv}{\stackrel{v}{\rightarrow}}
\newcommand{\conw}{\stackrel{w}{\rightarrow}}
\begin{document}
\maketitle

\begin{abstract}
In this paper we define the closure under weak convergence of  the class of $p$-tempered $\alpha$-stable distributions. We give necessary and sufficient conditions for convergence of sequences in this class. Moreover, we show that any element in this class can be approximated by the distribution of a linear combination of elementary $p$-tempered $\alpha$-stable random variables.\\

\noindent\textbf{Key words:}  Tempered Stable Distributions; Limit Theorems; Infinite Divisibility; Thorin Class; Goldie-Steutel-Bondesson Class; Weak Convergence
\end{abstract}

\section{Introduction}

Tempered stable distributions were introduced in Rosi\'nski 2007 \cite{Rosinski:2007} as a class of models that are similar to stable distributions in some central region, but they have lighter tails. Such models have been used successfully in a variety of areas including physics, biostatistics, and mathematical finance (see the references in \cite{Grabchak:2011}). A discussion of why such models come up in applications is given in \cite{Grabchak:Samorodnitsky:2010}. 

In \cite{Grabchak:2011} the wider class of $p$-tempered $\alpha$-stable distributions ($\ts$), where $p>0$ and $\alpha<2$, was introduced. Rosi\'nski's class corresponds to the case when $p=1$ and $\alpha\in(0,2)$. Tempered infinitely divisible distributions defined in \cite{Bianchi:Rachev:Kim:Fabozzi:2011} are another subclass corresponding to the case when $p=2$ and $\alpha\in[0,2)$. If we allow the distributions to have a Gaussian part, then we would have the class $J_{\alpha,p}$ defined in \cite{Maejima:Nakahara:2009}. This, in turn, contains important subclasses including the Thorin class (when $p = 1$ and $\alpha = 0$), the Goldie-Steutel-Bondesson class (when $p = 1$ and $\alpha = -1$), the class of type M distributions (when $p = 2$ and $\alpha= 0$), and the class of type G distributions (when $p = 2$ and $\alpha = -1$). For more information about these classes see the references in \cite{Barndorff-Nielsen:Maejima:Sato:2006} and \cite{Aoyama:Maejima:Rosinski:2008}.

In this paper we discuss the possible weak limits of $\ts$ distributions. It turns out that this class is not closed under weak convergence. We introduce the class of extended $p$-tempered $\alpha$-stable distributions ($\ets$), which is the smallest class that contains $\ts$ and is closed under weak convergence. For $\alpha\le0$ it corresponds to the class $J_{\alpha,p}$, but for $\alpha\in(0,2)$ we have $J_{\alpha,p}\subsetneq \ets$.

We also show that every $d$-dimensional $\ets$ distribution can be approximated by the distribution of a linear combination of $1$-dimensional elementary $\ts$ random variables. Elementary $\ts$ distributions, more commonly called ``smoothly truncated L\'evy flights,'' form a well studied subclass of $\ts$. For $p=1$ and $\alpha\in(0,2)$ the problem of simulation from such distributions is considered in, e.g.\ \cite{Baeumer:Meerschaert:2010} and \cite{Kawai:Masuda:2011}. Combining this with our results allows for approximate simulation of $\ets$ random vectors in $d$-dimensions. We will consider this in a future work.

This paper is organized as follows. In Section \ref{sec: ets} we define the class $\ets$. We take a detour in Section \ref{sec: compactification} to discuss a compactification of $\mathbb R^d$. In Section \ref{sec: extended Rosinski measure} we define a measure on this compactification, which determines many properties of $\ets$ distributions. Then, in Section \ref{sec: Limits of Sequences of Extended Tempered Stable Distributions}, we give our main limit theorem. Finally, in Section \ref{sec: Closure}, we show that the class $\ets$ is, in fact, the closure of $\ts$ under weak convergence.

Before proceeding, recall that the characteristic function of an infinitely divisible distribution $\mu$ on $\mathbb R^d$ can be written as $\hat\mu(z) = \exp\{C_{\mu}(z)\}$ where
\begin{eqnarray}\label{eq: inf div char func}
C_{\mu}(z) = -\frac{1}{2}\langle z,Az\rangle + i\langle b,z\rangle + \int_{\mathbb R^d}\left(e^{i\langle z,x\rangle}-1-i\frac{\langle z,x\rangle}{1+|x|^2}\right)M(\rd x),
\end{eqnarray}
$A$ is a Gaussian covariance matrix, $M$ is a L\'evy measure, and $b\in\mathbb R^d$. The measure $\mu$ is uniquely identified by the L\'evy triplet $(A,M,b)$, and we write $\mu= ID(A,M,b)$. For details about infinitely divisible distributions, the reader is referred to \cite{Sato:1999}. We use the notation  $\mathbb S^{d-1}=\{x\in\mathbb R^d: |x|=1\}$, and we write $\mathfrak B(\mathbb R^d)$ to denote the Borel sets on $\mathbb R^d$.

\section{Extended Tempered Stable Distributions}\label{sec: ets}

We begin by recalling the definition of $p$-tempered $\alpha$-stable distributions given in \cite{Grabchak:2011}.

\begin{definition} \label{defn: TS}
Fix $\alpha<2$ and $p>0$. An infinitely divisible probability measure $\mu$ is called a $\mathbf p$\textbf{-tempered} $\mathbf \alpha$\textbf{-stable distribution} if it has no Gaussian part and its L\'evy measure is given by
\begin{eqnarray}\label{eq:m}
M(B) =\int_{\mathbb S^{d-1}}\int_0^\infty 1_B(ru)q(r^p,u)r^{-\alpha-1}\rd r\sigma(\rd u), \quad B\in\mathfrak B(\mathbb R^d),
\end{eqnarray}
where $\sigma$ is a finite Borel measure on $\mathbb S^{d-1}$ and $q:(0,\infty)\times\mathbb S^{d-1}\mapsto\mathbb (0,\infty)$ is a Borel function such that for all $u\in\mathbb S^{d-1}$ $q(\cdot,u)$ is completely monotone and
\begin{eqnarray}\label{eq: q goes to zero}
\lim_{r\rightarrow\infty}q(r,u)=0.
\end{eqnarray}
We denote the class of $p$-tempered $\alpha$-stable distributions by $\ts$. If, in addition,
\begin{eqnarray}
\lim_{r\downarrow0}q(r,u)=1
\end{eqnarray}
for every $u\in \mathbb S^{d-1}$ then $\mu$ is called a \textbf{proper} $\mathbf p$\textbf{-tempered} $\alpha$\textbf{-stable distribution}.
\end{definition}

In \cite{Grabchak:2011} it was shown that $M$ is the L\'evy measure of a $p$-tempered $\alpha$-stable distribution if and only if there is a Borel measure $R$ on $\mathbb R^d$ such that
\begin{eqnarray}\label{eq:levy m for TS}
M(B) = \int_{\mathbb R^d}\int_0^\infty 1_B(tx)t^{-1-\alpha}e^{-t^p}\rd t R(\rd x), & B\in\mathfrak{B}(\mathbb{R}^d).
\end{eqnarray}
We call $R$ the \textbf{Rosi\'nski measure} of the $p$-tempered $\alpha$-stable distribution. Moreover, $R$ is the Rosi\'nski measure of some $p$-tempered $\alpha$-stable distribution if and only if $R(\{0\})=0$ and
\begin{eqnarray}\label{eq: integ cond on R}
\int_{\mathbb R^d} \left(|x|^2\wedge|x|^\alpha\right)R(\rd x)<\infty & \mathrm{if} \ \alpha\in(0,2),\nonumber\\
\int_{\mathbb R^d} \left(|x|^2\wedge[1+\log^+|x|]\right)R(\rd x)<\infty & \mathrm{if}  \ \alpha=0,\\
\int_{\mathbb R^d} \left(|x|^2\wedge1\right)R(\rd x)<\infty & \mathrm{if} \ \alpha<0,\nonumber
\end{eqnarray}
where $\log^+|x|= 1_{|x|\ge1}\log|x|$. We now extend $\ts$ to the smallest class that contains it and is closed under weak convergence. To do this we must allow for a Gaussian part and remove the assumption that \eqref{eq: q goes to zero} holds.

\begin{definition}\label{defn: ETS}
Fix $\alpha<2$ and $p>0$. An infinitely divisible probability measure $\mu$ is called an \textbf{extended} $\mathbf p$\textbf{-tempered} $\mathbf \alpha$\textbf{-stable distribution} if  its L\'evy measure is given by \eqref{eq:m} where $\sigma$ is a finite Borel measure on $\mathbb S^{d-1}$ and $q:(0,\infty)\times\mathbb S^{d-1}\mapsto\mathbb (0,\infty)$ is a Borel function such that for all $u\in\mathbb S^{d-1}$ $q(\cdot,u)$ is completely monotone. We denote the class of extended $p$-tempered $\alpha$-stable distributions by $\ets$.
\end{definition}

\begin{remark}\label{remark: no stable for neg alpha}
When $\alpha\le0$ \eqref{eq: q goes to zero} is necessary to ensure that $M$ is a L\'evy measure. Thus, whenever $\alpha\le0$ an $\ets$ distribution is just a $\ts$ distribution with a Gaussian part.
\end{remark}

\begin{remark}\label{remark: closed under convolutions}
Since the sum of completely monotone functions is completely monotone, it follows that both the class $\ts$ and the class $\ets$ are closed under taking convolutions.
\end{remark}

By Bernstein's Theorem (see e.g.\  
\cite{Feller:1971}) the complete monotonicity of $q(\cdot,u)$ implies that there is a measurable family $\{Q_u\}_{u\in \mathbb S^{d-1}}$ of Borel measures on $[0,\infty)$ such that
\begin{eqnarray}\label{eq:Q(|)}
q(r,u) = \int_{[0,\infty)} e^{-rs}Q_u(\mathrm d s).
\end{eqnarray}
Letting
\begin{eqnarray}
q_1(r,u) = \int_{(0,\infty)} e^{-rs}Q_u(\mathrm d s)
\end{eqnarray}
gives
\begin{eqnarray}
q(r^p,u) = q_1(r^p,u)+Q_u(\{0\}).
\end{eqnarray}
Thus, the L\'evy measure of a distribution in $\ets$ is given by
\begin{eqnarray}
M(B) &=& \int_{\mathbb S^{d-1}}\int_0^\infty 1_B(ru)q_1(r^p,u)r^{-\alpha-1}\rd r\sigma(\rd u)\nonumber\\
&&\quad + \int_{\mathbb S^{d-1}}\int_0^\infty 1_B(ru)r^{-\alpha-1}\rd r Q_u(\{0\})\sigma(\rd u), \quad B\in\mathfrak B(\mathbb R^d).
\end{eqnarray}
Note that, by Remark \ref{remark: no stable for neg alpha}, when $\alpha\le0$ we have $Q_u(\{0\})=0$. Since $q_1(\cdot,u)$ is completely monotone for all $u\in\mathbb S^{d-1}$ and it satisfies \eqref{eq: q goes to zero}, 
$M$ is the sum of the L\'evy measure of a $p$-tempered $\alpha$-stable distribution and (when $\alpha\in(0,2)$) an $\alpha$-stable distribution with spectral measure $Q_u(\{0\})\sigma(\rd u)$. 
If $R$ is the Rosi\'nski measure of the $p$-tempered $\alpha$-stable part then
\begin{eqnarray}\label{eq: Levy measure of ets in terms of R}
M(B) &=& \int_{\mathbb R^d}\int_0^\infty 1_B(rx) r^{-1-\alpha}e^{-r^p}\rd r R(\rd x) \nonumber\\
&& \quad + \int_{\mathbb S^{d-1}}\int_0^\infty 1_B(r u) r^{-1-\alpha}\rd r Q_u(\{0\})\sigma(\rd u), \quad B\in\mathfrak B(\mathbb R^d).\label{eq: levy of ETS}
\end{eqnarray}
\begin{remark}
This implies that a distribution is in $\ets$ if and only if it can be written as the convolution of a Gaussian distribution, an element of $\ts$, and (when $\alpha\in(0,2)$) an $\alpha$-stable distribution. 
\end{remark}

Note that $M$ is defined in terms of two measures $R(\rd x)$ and $Q_u(\{0\})\sigma(\rd u)$. To make it easier to work with we combine these into one measure, which we will define on a particular compactification of $\mathbb R^d$.

\section{A Compactification of $\mathbb R^d$}\label{sec: compactification}

In this section we develop a compactification of $\mathbb R^d$ with a sphere at infinity. Vague convergence of Radon measures on this space will be fundamental to our main results, therefore we give a detailed discussion. 

Let $\mathbb R^d_0=\mathbb R^d\setminus\{0\}$ and note that for $x\in\mathbb R^d_0$ we have $x = |x|\frac{x}{|x|}$. Thus we can uniquely identify every element of $\mathbb R^d_0$ with an element of $(0,\infty)\times\mathbb S^{d-1}$. Let $\bar{\mathbb R}_0^d=(0,\infty]\times \mathbb S^{d-1}$ and $\bar{\mathbb R}^d = \bar{\mathbb R}_0^d\cup\{0\}$. For simplicity of notation define $\mathbb I^{d-1} = \{\infty\}\times\mathbb S^{d-1}$ and $\infty u = (\infty,u)$. We introduce the functions $\xi: \mathbb{\bar R}^d \mapsto \mathbb S^{d-1}\cup\{0\}$ and $\vartheta: \mathbb{\bar R}^d \mapsto [0,\infty]$ as follows. Let $\xi(0) = \vartheta(0)=0$. If $x\in \mathbb {\bar R}^d_0$ then $x=(r,u)$ and we define $\xi(x)=u$ and $\vartheta(x)=r$. For simplicity, we sometimes write $|x|:=\vartheta(x)$, and when $x\in\mathbb I^{d-1}$ we take $|x|^{-1}=1/|x|=0$.

Let $\stackrel{\bar{\mathbb R}_+}{\rightarrow}$ and $\stackrel{\mathbb R^d}{\rightarrow}$ denote, respectively, the usual convergence on $[0,\infty]$ and on $\mathbb R^d$. If $x,x_1,x_2, \dots \in \bar{\mathbb R}^d_0$, we will write 
$x_n\rightarrow x$ when $\vartheta(x_n)\stackrel{\bar{\mathbb R}_+}{\rightarrow} \vartheta(x)$ and $\xi(x_n)\stackrel{\mathbb R^d}{\rightarrow} \xi(x)$. Let $\tau_0$ be the class of subsets of $\bar{\mathbb R}^d_0$ such that $A\in\tau_0$ if and only if for any $x\in A$ and any $x_1,x_2,\dots\in\bar{\mathbb R}^d_0$ with $x_n\rightarrow x$ there is an $N$ such that for all $n\ge N$, $x_n\in A$. It is straightforward to show that $\tau_0$ is a topology. In this topology compact sets are closed sets that are bounded away from $0$. The Borel $\sigma$-algebra on $\mathbb {\bar R}^d_0$ is the $\sigma$-algebra generated by  $\tau_0$; we denote it by $\mathfrak B(\mathbb {\bar R}^d_0)$.

To define convergence of a sequence in $\mathbb {\bar R}^d$, we first define convergence to a point $x\neq0$ as before. For $x_1,x_2,\dots\in\mathbb{\bar R}^d$ we write 
$x_n\rightarrow 0$ when $\vartheta(x_n)\stackrel{\bar{\mathbb R}_+}{\rightarrow} 0$. Note that  if $x,x_1,x_2,\dots\in\bar{\mathbb R}^d\setminus\mathbb I^{d-1}$ then $x_n\rightarrow x$ if and only if $x_n\stackrel{\mathbb R^d}{\rightarrow}x$. We define a topology $\tau$ on $\mathbb {\bar R}^d$ in a manor analogous to the previous case. Here the compact sets are the closed sets. The Borel $\sigma$-algebra on $\mathbb {\bar R}^d$ is the $\sigma$-algebra generated by $\tau$; we denote it by $\mathfrak B(\mathbb {\bar R}^d)$.

For notational convenience, throughout this paper we identify Borel measures on $\mathbb{\bar R}^d_0$ with Borel measures on $\mathbb {\bar R}^d$ that place no mass at zero. Likewise, we identify Borel measures on $\mathbb R^d$ with Borel measures on $\mathbb {\bar R}^d$ that place no mass on $\mathbb I^{d-1}$.

A Borel measure on $\bar{\mathbb R}^d_0$ is called a \textbf{Radon measure} if it is finite on any subset that is bounded away from $0$. Note that all L\'evy measures and all Rosi\'nski measures are Radon measures on $\mathbb {\bar R}^d_0$. We now define vague convergence on the spaces $\mathbb {\bar R}_0^d$, $\mathbb {\bar R}^d$, and $\mathbb R^d_0$. All three follow from the general definition of vague convergence on a topological space (see e.g.\ Chapter 3 in \cite{Resnick:1987}).

\begin{definition}\label{defn: vague conv on R0}
Let $\mu_0,\mu_1,\mu_2,\dots$ be Radon measures on $\bar{\mathbb R}^d_0$. We write $\mu_n\conv\mu_0$ on $\mathbb {\bar R}^d_0$ if for all continuous, real-valued functions $f$ on $\bar{\mathbb R}^d$ vanishing on a neighborhood of zero 
\begin{eqnarray}\label{eq: defn vague conv}
\lim_{n\rightarrow\infty}\int_{\bar{\mathbb R}^d} f(x)\mu_n(\rd x) = \int_{\bar{\mathbb R}^d} f(x)\mu_0(\rd x).
\end{eqnarray}
\end{definition}

\begin{definition}\label{defn: vague conv on R}
Let $\mu_0,\mu_1,\mu_2,\dots$ be finite Borel measures on $\bar{\mathbb R}^d$. We write $\mu_n\conv\mu_0$ on $\mathbb {\bar R}^d$ if for all continuous, real-valued functions $f$ on $\bar{\mathbb R}^d$ \eqref{eq: defn vague conv} holds.
\end{definition}

\begin{definition}
Let $\mu_0,\mu_1,\mu_2,\dots$ be Borel measures on $\mathbb R^d_0$ such that for all $0<a<b<\infty$ we have $\mu_n([a,b])<\infty$. If \eqref{eq: defn vague conv} holds for any continuous, real-valued function $f$ vanishing on a neighborhood of zero and on a neighborhood of infinity we write $\mu_n\conv\mu_0$ on $\mathbb R^d_0$.
\end{definition}

It is not difficult to show that $\bar{\mathbb R}^d_0$ with the topology $\tau_0$ and $\bar{\mathbb R}^d$ with the topology $\tau$ are locally compact Hausdorff spaces with a countable basis. This implies that a number of standard results about vague convergence on $\mathbb R^d$ extend to these spaces. In particular, Theorems 3.12 and 3.16 in \cite{Resnick:1987}, imply that a version of the Portmanteau Theorem and Helly's Selection Theorem hold. The latter can be formulated as follows.

\begin{proposition}\label{prop: convergent subseq} Let $\{\mu_n\}$ be a sequence of Borel measures on $\bar{\mathbb R}^d$ with
$
\sup\mu_n(\mathbb {\bar R}^d)<\infty.
$
There exists a subsequence $\{\mu_{n_k}\}$ and a finite Borel measure $\mu_0$ on $\mathbb {\bar R}^d$ such that $\mu_{n_k}\conv\mu_0$ on $\mathbb {\bar R}^d$.
\end{proposition}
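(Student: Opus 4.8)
The plan is to reduce the statement to the classical Helly selection theorem by using that $(\bar{\mathbb R}^d,\tau)$ is a compact metric space. First I would collect the topological facts available from the discussion preceding the proposition: $(\bar{\mathbb R}^d,\tau)$ is locally compact Hausdorff with a countable base, and its compact sets are precisely its closed sets; since $\bar{\mathbb R}^d$ is closed in itself, the whole space is compact. A second countable compact Hausdorff space is metrizable, so $C(\bar{\mathbb R}^d)$, the space of continuous real-valued functions on it with the supremum norm, is a separable Banach space. Since every continuous function on a compact space has compact support, vague convergence in the sense of Definition \ref{defn: vague conv on R} is exactly vague convergence on $\bar{\mathbb R}^d$ in the sense of Chapter 3 of \cite{Resnick:1987}, so one may simply cite Theorem 3.16 there; for completeness I would include the short direct argument below.

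Next I would identify each $\mu_n$ with the positive linear functional $\ell_n(f)=\int_{\bar{\mathbb R}^d}f\,\rd\mu_n$ on $C(\bar{\mathbb R}^d)$, which satisfies $\|\ell_n\|=\mu_n(\bar{\mathbb R}^d)\le C:=\sup_m\mu_m(\bar{\mathbb R}^d)<\infty$. Choosing a countable dense set $\{f_j\}_{j\ge1}\subset C(\bar{\mathbb R}^d)$ and using $|\ell_n(f_j)|\le C\|f_j\|_\infty$, a diagonal extraction yields a subsequence $\{n_k\}$ along which $\ell_{n_k}(f_j)$ converges for every $j$; a $3\varepsilon$-estimate based on density of $\{f_j\}$ and the uniform bound $\|\ell_{n_k}\|\le C$ then upgrades this to convergence of $\ell_{n_k}(f)$ for every $f\in C(\bar{\mathbb R}^d)$, with limit $\ell(f)$. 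The functional $\ell$ is linear, bounded by $C$ in norm, and positive, since each $\ell_{n_k}$ is. By the Riesz representation theorem on the compact Hausdorff space $\bar{\mathbb R}^d$ there is a unique finite Borel measure $\mu_0$ with $\ell(f)=\int_{\bar{\mathbb R}^d}f\,\rd\mu_0$ for all $f\in C(\bar{\mathbb R}^d)$; in particular $\mu_0(\bar{\mathbb R}^d)=\ell(1)=\lim_k\mu_{n_k}(\bar{\mathbb R}^d)\le C$. Then $\int f\,\rd\mu_{n_k}\to\int f\,\rd\mu_0$ for every continuous $f$, which is precisely $\mu_{n_k}\conv\mu_0$ on $\bar{\mathbb R}^d$.

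The only part that requires care is verifying the topological prerequisites — compactness, the Hausdorff property, and second countability of $(\bar{\mathbb R}^d,\tau)$, and hence metrizability and separability of $C(\bar{\mathbb R}^d)$ — but these are exactly what is asserted in the paragraph preceding the proposition, so the rest is routine. It is worth stressing that compactness is the crux: it is what rules out an escape of mass to infinity and guarantees that the subsequential limit $\mu_0$ is again a finite measure with total mass at most $C$, which is the whole reason for passing from $\mathbb R^d$ to the compactification $\bar{\mathbb R}^d$.
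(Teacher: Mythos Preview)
Your proposal is correct and matches the paper's approach: the paper does not give a proof at all but simply states the proposition as an instance of Helly's Selection Theorem, citing Theorem~3.16 in \cite{Resnick:1987} after observing that $\bar{\mathbb R}^d$ is a locally compact Hausdorff space with a countable basis. Your write-up makes the same citation and then supplies, for completeness, the standard diagonal-plus-Riesz argument that underlies that theorem in the compact case; this extra detail is sound but goes beyond what the paper records.
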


We now give a useful characterization of vague convergence on $\bar{\mathbb R}^d$ for the special case when none of the measures place mass on $\mathbb I^{d-1}$. Let $C^b$ be the class of Borel functions mapping $\mathbb {\bar R}^d$ into $\mathbb R$, which are continuous and bounded on $\mathbb {\bar R}^d\setminus \mathbb I^{d-1}$.  We make no assumption about their behavior on $\mathbb I^{d-1}$.

\begin{lemma}\label{lemma: C-sharp suff for vague conv}
Let $\mu_0,\mu_1,\mu_2,\dots$ be finite Borel measures on $\mathbb{\bar R}^d$ such that $\mu_n(\mathbb I^{d-1})=0$ for all $n$. Then $\mu_n\conv \mu_0$ on $\mathbb{\bar R}^d$ if and only if $\int_{\mathbb {\bar R}^d} f(x)\mu_n(\rd x)\rightarrow\int_{\mathbb {\bar R}^d} f(x)\mu_0(\rd x)$ for all $f\in C^b$.
\end{lemma}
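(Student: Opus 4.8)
The plan is to establish the two implications separately; the reverse one is essentially immediate and the forward one carries all the content. For the reverse implication, recall that $\bar{\mathbb R}^d$ is compact, so every $f\in C(\bar{\mathbb R}^d)$ is bounded, and therefore its restriction to $\bar{\mathbb R}^d\setminus\mathbb I^{d-1}$ is continuous and bounded; that is, $C(\bar{\mathbb R}^d)\subseteq C^b$. Hence, if $\int f\,\rd\mu_n\to\int f\,\rd\mu_0$ for every $f\in C^b$, then this holds for every $f\in C(\bar{\mathbb R}^d)$, which by Definition \ref{defn: vague conv on R} is precisely $\mu_n\conv\mu_0$ on $\bar{\mathbb R}^d$. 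No hypothesis about $\mathbb I^{d-1}$ is used in this direction.

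For the forward implication, fix $f\in C^b$ and let $B$ bound $|f|$ on $\bar{\mathbb R}^d\setminus\mathbb I^{d-1}$; since each $\mu_n$, and also $\mu_0$, puts mass zero on $\mathbb I^{d-1}$, all the integrals $\int f\,\rd\mu_n$ are well defined and finite. The idea is to damp $f$ to zero near the sphere at infinity and compare it with a genuinely continuous function. Since $\vartheta$ is continuous on $\bar{\mathbb R}^d$, for $t>0$ I would set $\phi_t=g_t\circ\vartheta$, where $g_t:[0,\infty]\to[0,1]$ is continuous with $g_t\equiv1$ on $[0,t]$ and $g_t\equiv0$ on $[2t,\infty]$. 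Then $\phi_t\in C(\bar{\mathbb R}^d)$, $0\le\phi_t\le1$, $\phi_t\equiv1$ on $\{\vartheta\le t\}$, and $\phi_t\equiv0$ on the open set $\{\vartheta>2t\}$, which contains $\mathbb I^{d-1}$. A short check shows $f\phi_t\in C(\bar{\mathbb R}^d)$: at any point with $\vartheta<\infty$ both factors are continuous on the open set $\bar{\mathbb R}^d\setminus\mathbb I^{d-1}$, while at any point of $\mathbb I^{d-1}$ the product vanishes identically on the open neighborhood $\{\vartheta>2t\}$; moreover $|f\phi_t|\le B$ everywhere.

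Then I would write $\int f\,\rd\mu_n-\int f\,\rd\mu_0 = \int(f-f\phi_t)\,\rd\mu_n + \left(\int f\phi_t\,\rd\mu_n-\int f\phi_t\,\rd\mu_0\right) + \int(f\phi_t-f)\,\rd\mu_0$. The middle term tends to $0$ as $n\to\infty$ because $f\phi_t\in C(\bar{\mathbb R}^d)$ and $\mu_n\conv\mu_0$. Since $f-f\phi_t$ is bounded by $B$ and vanishes on $\{\vartheta\le t\}$, the first and third terms are bounded in absolute value by $B\,\mu_n(\{\vartheta\ge t\})$ and $B\,\mu_0(\{\vartheta\ge t\})$, respectively. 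Taking $f\equiv1$ in Definition \ref{defn: vague conv on R} gives $\mu_n(\bar{\mathbb R}^d)\to\mu_0(\bar{\mathbb R}^d)$, so the Portmanteau Theorem on $\bar{\mathbb R}^d$ (valid as noted before Proposition \ref{prop: convergent subseq}), applied to the closed set $\{\vartheta\ge t\}$, yields $\limsup_n\mu_n(\{\vartheta\ge t\})\le\mu_0(\{\vartheta\ge t\})$. Therefore $\limsup_n\left|\int f\,\rd\mu_n-\int f\,\rd\mu_0\right|\le 2B\,\mu_0(\{\vartheta\ge t\})$ for every $t>0$. Letting $t\to\infty$ and using that $\mu_0$ is finite with $\bigcap_{t>0}\{\vartheta\ge t\}=\mathbb I^{d-1}$ and $\mu_0(\mathbb I^{d-1})=0$, the right-hand side tends to $0$, so $\int f\,\rd\mu_n\to\int f\,\rd\mu_0$, as desired.

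The substance here is not a hard estimate but two structural points. First, the hypothesis must be used for the limit $\mu_0$ as well: without $\mu_0(\mathbb I^{d-1})=0$ the statement fails (e.g.\ $\delta_{(n,u)}\conv\delta_{\infty u}$, yet $\int 1_{\mathbb I^{d-1}}\,\rd\delta_{(n,u)}=0\not\to1$ while $1_{\mathbb I^{d-1}}\in C^b$), and it is exactly $\mu_0(\mathbb I^{d-1})=0$ that forces $\mu_0(\{\vartheta\ge t\})\to0$. Second, the cutoff $\phi_t$ must vanish on a whole neighborhood of $\mathbb I^{d-1}$, not merely on $\mathbb I^{d-1}$, so that $f\phi_t$ is continuous on all of $\bar{\mathbb R}^d$ even though $f$ is completely uncontrolled on $\mathbb I^{d-1}$; this is possible precisely because $\vartheta$ is continuous. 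Once these are in place, the uniform-in-$n$ control of the mass near infinity is delivered for free by Portmanteau together with the convergence of total masses.
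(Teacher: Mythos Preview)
Your proof is correct and follows essentially the same strategy as the paper's: truncate $f$ near $\mathbb I^{d-1}$, invoke Portmanteau to control the tail mass uniformly in $n$, and conclude using $\mu_0(\mathbb I^{d-1})=0$. The only difference is in execution---the paper uses sharp cutoffs $1_{\{|x|\le T\}}$ at $\mu_0$-continuity points $T$ together with a reduction to $f\ge 0$ and separate $\liminf/\limsup$ estimates, whereas you use a continuous bump $\phi_t$ and a single three-term decomposition; your variant is slightly cleaner but the underlying idea is identical.
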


\begin{proof}
Assume that $\mu_n\conv \mu_0$ on $\mathbb{\bar R}^d$, let $H=\{T\in(0,\infty): \mu_0(|x|=T) = 0\}$, and fix $f\in C^b$.  This means that there is a $K$ such that $|f(x)|\le K$ for all $x\in\mathbb R^d$. Without loss of generality assume that $f(x)\ge0$. From the Portmanteau Theorem (Theorem 3.12 in \cite{Resnick:1987}) it follows that for all $T\in H$
$$
\lim_{n\rightarrow\infty} \int_{|x|\le T}f(x)\mu_n(\rd x) = \int_{|x|\le T}f(x)\mu_0(\rd x).
$$
Thus
\begin{eqnarray*}
\liminf_{n\rightarrow\infty}\int_{\mathbb {\bar R}^d} f(x)\mu_n(\rd x) &\ge& \lim_{H\ni T\uparrow\infty}\lim_{n\rightarrow\infty} \int_{|x|\le T}f(x)\mu_n(\rd x) \\
&=& \lim_{H\ni T\uparrow\infty}\int_{|x|\le T}f(x)\mu_0(\rd x)= \int_{\mathbb {\bar R}^d} f(x)\mu_0(\rd x),
\end{eqnarray*}
where the last equality follows by dominated convergence. Since $\mu_0(\mathbb I^{d-1})=0$, for any $\delta>0$ there is a $T_\delta\in H$ with $\mu_0(|x|\ge T_\delta)\le\delta/ K$. Thus
\begin{eqnarray*}
\int_{\mathbb {\bar R}^d}f(x)\mu_n(\rd x) &\le& \int_{|x|\le T_\delta}f(x)\mu_n(\rd x) + K\mu_n(|x|>T_\delta)\\
&\rightarrow& \int_{|x|\le T_\delta}f(x)\mu_0(\rd x) + K\mu_0(|x|>T_\delta)\\
&\le& \int_{\mathbb {\bar R}^d}f(x)\mu_0(\rd x) + \delta.
\end{eqnarray*}
Since this holds for all $\delta>0$, $\limsup_{n\rightarrow\infty}\int_{\mathbb {\bar R}^d}f(x)\mu_n(\rd x)\le \int_{\mathbb {\bar R}^d}f(x)\mu_0(\rd x)$, and hence 
$$
\lim_{n\rightarrow\infty}\int_{\mathbb {\bar R}^d}f(x)\mu_n(\rd x)=\int_{\mathbb {\bar R}^d}f(x)\mu_0(\rd x).
$$
The other direction follows from Definition \ref{defn: vague conv on R}.
\end{proof}

We conclude this section by recalling a standard result about convergence of infinitely divisible distributions in terms of vague convergence of their L\'evy measures. The following is a variant of Theorem 3.1.16 and Corollary 2.1.17 in \cite{Meerschaert:Scheffler:2001}. 

\begin{proposition} \label{prop: ID limits}
Let $\mu_n=ID(A_n,M_n,b_n)$. If $\mu_n\conw\mu$ then $\mu=ID(A,M,b)$. Moreover, $\mu_n\conw\mu$ if and only if $M_n \conv M$ on $\bar{\mathbb R}^d_0$, $b_n\rightarrow b$, and
\begin{eqnarray}\label{eq: gaus comp inf div}
\lim_{\epsilon\downarrow0} \lim_{n\rightarrow\infty} \left(A_n + \int_{|x|\le \epsilon} xx^T M_n(\rd x)\right) = A.
\end{eqnarray}
The result remains true if \eqref{eq: gaus comp inf div} is replaced by
\begin{eqnarray}\label{eq: gaus comp inf div limsup}
\lim_{\epsilon\downarrow0} \liminf_{n\rightarrow\infty} \left(A_n + \int_{|x|\le \epsilon} xx^T M_n(\rd x)\right) = \lim_{\epsilon\downarrow0} \limsup_{n\rightarrow\infty} \left(A_n + \int_{|x|\le \epsilon} xx^T M_n(\rd x)\right) = A.
\end{eqnarray}
\end{proposition}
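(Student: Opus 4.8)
The plan is to obtain this as a repackaging of the classical convergence criterion for infinitely divisible laws, combining Lévy's continuity theorem with the canonical splitting of the Lévy--Khintchine exponent \eqref{eq: inf div char func}; the only genuinely new point is to translate the conditions appearing in \cite{Meerschaert:Scheffler:2001} into vague convergence on the compactification $\bar{\mathbb R}^d_0$. I would begin with two standard facts: the class of infinitely divisible distributions on $\mathbb R^d$ is closed under weak convergence (see \cite{Sato:1999}), so $\mu_n\conw\mu$ already forces $\mu=ID(A,M,b)$ for some triplet; and, by Lévy's continuity theorem together with the fact that characteristic functions of infinitely divisible laws never vanish, $\mu_n\conw\mu$ is equivalent to $C_{\mu_n}(z)\to C_\mu(z)$ for every $z\in\mathbb R^d$, where $C_{\mu_n}$ has triplet $(A_n,M_n,b_n)$ and $C_\mu$ has triplet $(A,M,b)$.

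The key preliminary step is the equivalence, for Radon measures $M_n,M$ on $\bar{\mathbb R}^d_0$ placing no mass on $\mathbb I^{d-1}$ (which, by the identifications of Section \ref{sec: compactification}, applies to all L\'evy measures), of $M_n\conv M$ on $\bar{\mathbb R}^d_0$ with the conjunction of $M_n\conv M$ on $\mathbb R^d_0$ and the tightness-at-infinity condition $\lim_{T\to\infty}\limsup_{n\to\infty}M_n(|x|>T)=0$. For the forward direction, testing against functions $f$ continuous on $\bar{\mathbb R}^d$ with $0\le f\le 1$, $f\equiv0$ on $\{|x|\le T\}$, and $f\equiv1$ on $\{|x|\ge 2T\}\cup\mathbb I^{d-1}$ yields $\limsup_n M_n(|x|\ge 2T)\le M(|x|\ge T)$, and $M(|x|\ge T)\downarrow M(\mathbb I^{d-1})=0$; since functions supported in a bounded subset of $\mathbb R^d_0$ are admissible test functions on $\bar{\mathbb R}^d_0$, vague convergence on $\mathbb R^d_0$ follows as well. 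Conversely, given the two conditions, an arbitrary $f$ continuous on $\bar{\mathbb R}^d$ and vanishing near $0$ is bounded, and truncating it smoothly at a continuity radius $T$ of $M$ reduces $\int f\,\rd M_n\to\int f\,\rd M$ to vague convergence on an annulus, with the remainder controlled by $\|f\|_\infty M_n(|x|>T)$ and $\|f\|_\infty M(|x|>T)$, both negligible as $T\to\infty$.

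With this equivalence in hand the rest is the usual computation. Fix $z$ and a continuity radius $\epsilon$ of $M$ and split the integral in $C_{\mu_n}(z)$ at $|x|=\epsilon$. On $\{|x|>\epsilon\}$ the integrand $g_z(x)=e^{i\langle z,x\rangle}-1-i\langle z,x\rangle/(1+|x|^2)$ is bounded but, owing to the oscillation of $e^{i\langle z,x\rangle}$, it does not extend continuously to $\mathbb I^{d-1}$; so I would split once more at a continuity radius $T$, use vague convergence on $\mathbb R^d_0$ on the annulus $\{\epsilon<|x|\le T\}$, and bound the contribution of $\{|x|>T\}$ by a constant times $M_n(|x|>T)$, which is where tightness at infinity is used. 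On $\{|x|\le\epsilon\}$ a second-order Taylor expansion gives $g_z(x)=-\tfrac12\langle z,x\rangle^2+O(|x|^3)$ (with a constant depending on $z$), so that $-\tfrac12\langle z,A_nz\rangle+\int_{|x|\le\epsilon}g_z\,\rd M_n$ equals $-\tfrac12\langle z,(A_n+\int_{|x|\le\epsilon}xx^T M_n(\rd x))z\rangle$ up to an error bounded by $C_z\,\epsilon\int_{|x|\le\epsilon}|x|^2\,M_n(\rd x)$. Letting $n\to\infty$ and then $\epsilon\downarrow0$ shows that $C_{\mu_n}(z)\to C_\mu(z)$ for all $z$ is equivalent to the three stated conditions, which is exactly Theorem 3.1.16 and Corollary 2.1.17 of \cite{Meerschaert:Scheffler:2001} rephrased on $\bar{\mathbb R}^d_0$. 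The final assertion is elementary: the matrices $A_n+\int_{|x|\le\epsilon}xx^T M_n(\rd x)$ are nondecreasing in $\epsilon$, so the iterated limit in \eqref{eq: gaus comp inf div} exists and equals $A$ if and only if the iterated liminf and limsup in \eqref{eq: gaus comp inf div limsup} both exist and equal $A$. The main obstacle is precisely the behaviour at infinity: making rigorous that vague convergence on the compactified space is the right bookkeeping device, simultaneously ruling out escape of L\'evy mass to infinity and remaining compatible with the oscillatory integrand of the L\'evy--Khintchine formula, which is not continuous at $\mathbb I^{d-1}$.
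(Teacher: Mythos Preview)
Your sketch is essentially correct, but note that the paper does not actually prove this proposition at all: it is stated as ``a variant of Theorem 3.1.16 and Corollary 2.1.17 in \cite{Meerschaert:Scheffler:2001}'' and then simply used. So you are supplying considerably more detail than the paper itself, which treats the result as a citation. Your translation step---showing that $M_n\conv M$ on $\bar{\mathbb R}^d_0$ is equivalent to vague convergence on $\mathbb R^d_0$ together with the no-escape-to-infinity condition $\lim_{T\to\infty}\limsup_n M_n(|x|>T)=0$---is precisely the bridge needed between the paper's compactified formulation and the standard statements in \cite{Meerschaert:Scheffler:2001} or \cite{Sato:1999}, and your argument for it is sound.

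One small point to tighten: your final sentence about the equivalence of \eqref{eq: gaus comp inf div} and \eqref{eq: gaus comp inf div limsup} via monotonicity in $\epsilon$ is not quite the right justification. Monotonicity in $\epsilon$ does not by itself force the inner limit in $n$ to exist. The correct reading of ``the result remains true'' is that \eqref{eq: gaus comp inf div limsup} can replace \eqref{eq: gaus comp inf div} in the \emph{characterization}: one shows that $\mu_n\conw\mu$ implies \eqref{eq: gaus comp inf div} (hence \eqref{eq: gaus comp inf div limsup} trivially), while for the converse direction the weaker condition \eqref{eq: gaus comp inf div limsup} already suffices in your Taylor-expansion argument, since only matching $\liminf$ and $\limsup$ are needed there. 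This is a minor rewording rather than a gap.
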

In the above and throughout, convergence of matrices should be interpreted as pointwise convergence of the components.

\section{Extended Rosi\'nski Measure}\label{sec: extended Rosinski measure}

We now return to our discussion of the L\'evy measures of extended $p$-tempered $\alpha$-stable distributions. Recall that the L\'evy measure of such a distribution can be given by \eqref{eq: Levy measure of ets in terms of R}. In this section, we will put it into a form that is easier to work with. First, let $\nu$ be a Borel measure on $\mathbb{\bar R}^d$ 
such that if $B \in\mathfrak B(\bar{\mathbb R}^d)$ then if $\alpha\in(0,2)$ 
\begin{eqnarray}
\nu(B) = \int_{\mathbb R^d}1_B(x)\left(|x|^2\wedge|x|^\alpha\right) R(\rd x) + \int_{\mathbb S^{d-1}} 1_B(\infty x) Q_x(\{0\})\sigma(\rd x)
\end{eqnarray}
and
\begin{eqnarray}
\nu(B) = \left\{
\begin{array}{lr}
\int_{\mathbb R^d}1_B(x)\left(|x|^2\wedge[1+\log^+|x|]\right)R(\rd x) & \mbox{if}\ \alpha=0\\
\int_{\mathbb R^d}1_B(x)\left(|x|^2\wedge1\right)R(\rd x) & \mbox{if}\ \alpha<0
\end{array}\right..
\end{eqnarray}
Note that $\nu(\{0\})=0$ and by \eqref{eq: integ cond on R} $\nu$ is a finite measure. In particular, $\nu$ is a Radon measure on $\mathbb {\bar R}^d_0$.  We will call it the \textbf{extended Rosi\'nski measure}. From $\nu$ we get $R$ back by
\begin{eqnarray}\label{eq: back to nu from R}
R(\rd x) = \left\{
\begin{array}{ll}
\left(|x|^2\wedge|x|^\alpha\right)^{-1} \nu_{|_{\mathbb R^d}}(\rd x) & \mbox{if}\ \alpha\in(0,2)\\
\left(|x|^2\wedge[1+\log^+|x|]\right)^{-1} \nu_{|_{\mathbb R^d}}(\rd x) & \mbox{if}\ \alpha=0\\
\left(|x|^2\wedge1\right)^{-1} \nu_{|_{\mathbb R^d}}(\rd x) & \mbox{if}\ \alpha<0
\end{array}\right.,
\end{eqnarray}
where $\nu_{|_{\mathbb R^d}}$ is the restriction of $\nu$ to $\mathbb R^d$.

\begin{remark}
Let $\nu$ be any finite Borel measure on $\mathbb {\bar R}^d$ with $\nu(\{0\})=0$. For any $p>0$ and $\alpha\in(0,2)$, $\nu$ is the extended Rosi\'nski measure of some distribution in $\ets$. If, in addition, $\nu(\mathbb I^{d-1})=0$ then for any $p>0$ and $\alpha<2$, $\nu$ is the extended Rosi\'nski measure of some distribution in $\ets$.
\end{remark}

\begin{proposition}\label{prop: uniq determ}
For a fixed $\alpha<2$ and $p>0$, the extended Rosi\'nski measure $\nu$ is uniquely determined by the L\'evy measure of the extended $p$-tempered $\alpha$-stable distribution.
\end{proposition}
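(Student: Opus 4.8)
The plan is to show that the Lévy measure $M$ of an $\ets$ distribution determines both the Rosiński measure $R$ of its $p$-tempered $\alpha$-stable part and (when $\alpha\in(0,2)$) the spectral measure $\rho(\rd u):=Q_u(\{0\})\sigma(\rd u)$ of its $\alpha$-stable part. Since $\nu$ is defined by the explicit formulas of Section~\ref{sec: extended Rosinski measure} in terms of $R$ and $\rho$ (with $\rho\equiv0$ when $\alpha\le0$, by Remark~\ref{remark: no stable for neg alpha}), uniqueness of $\nu$ follows immediately once $R$ and $\rho$ are shown to be uniquely recoverable from $M$.

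Fix $A\in\mathfrak B(\mathbb S^{d-1})$. Integrating the non-negative Borel function $x\mapsto 1_A(x/|x|)g(|x|)$, with $g$ bounded and supported in some $[a,b]\subset(0,\infty)$, against \eqref{eq: levy of ETS}, and substituting $t=r|x|$ in the first term, I obtain
\[
\int_{\mathbb R^d_0}1_A(x/|x|)g(|x|)\,M(\rd x)=\int_0^\infty g(t)\,t^{-1-\alpha}\phi_A(t)\,\rd t,\qquad
\phi_A(t):=\rho(A)+\int_{\mathbb R^d_0}1_A(x/|x|)\,|x|^\alpha e^{-(t/|x|)^p}R(\rd x).
\]
Splitting the inner integral at $|x|=1$, bounding $|x|^\alpha e^{-(t/|x|)^p}$ by $|x|^\alpha$ on $\{|x|>1\}$ and by $C_t|x|^2$ on $\{|x|\le1\}$, the integrability condition \eqref{eq: integ cond on R} on $R$ shows $\phi_A(t)<\infty$ for every $t>0$, and a dominated-convergence argument (dominating, for $t\ge t_0$, by $|x|^\alpha e^{-(t_0/|x|)^p}$) shows $\phi_A$ is continuous on $(0,\infty)$. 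Taking $g=1_{(a,b]}$, $M$ determines $\int_a^b t^{-1-\alpha}\phi_A(t)\,\rd t$ for all $0<a<b<\infty$, hence it determines the continuous function $\phi_A$. Since $|x|^\alpha e^{-(t/|x|)^p}\downarrow0$ as $t\to\infty$ for each $x\in\mathbb R^d_0$, dominated convergence gives $\phi_A(t)\to\rho(A)$, so $M$ determines $\rho(A)$ for every $A$, i.e. it determines $\rho$.

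It then remains to recover $R$. The function $\psi_A(t):=\phi_A(t)-\rho(A)$ equals $\int_0^\infty e^{-(t/r)^p}\kappa_A(\rd r)$, where $\kappa_A(E):=\int_{\mathbb R^d_0}1_A(x/|x|)1_E(|x|)\,|x|^\alpha R(\rd x)$ for $E\in\mathfrak B((0,\infty))$. Hence $v\mapsto\psi_A(v^{1/p})$ is the (everywhere finite) Laplace transform of the image of $\kappa_A$ under $r\mapsto r^{-p}$, so by uniqueness of Laplace transforms $M$ determines this image measure, and therefore $\kappa_A$. Writing $\tilde R(A\times E):=\kappa_A(E)$, $M$ thus determines $\tilde R$ on all measurable rectangles of $\mathbb S^{d-1}\times(0,\infty)$; since $\tilde R(\mathbb S^{d-1}\times[1/n,n])=\int_{1/n\le|x|\le n}|x|^\alpha R(\rd x)<\infty$ by \eqref{eq: integ cond on R}, $\tilde R$ is $\sigma$-finite and hence uniquely determined. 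Finally $R$ is recovered from $\tilde R$ through $R(B)=\int_{\mathbb S^{d-1}\times(0,\infty)}1_B(ru)\,r^{-\alpha}\,\tilde R(\rd u,\rd r)$, so $R$ is determined by $M$ as well, completing the argument.

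The main obstacle is not any single hard estimate but the bookkeeping needed to justify the two key reductions: the finiteness and continuity of $\phi_A$ (which is exactly where the integrability condition \eqref{eq: integ cond on R} enters), and the passage, via the $\sigma$-finiteness of $\tilde R$, from ``the Laplace transform of every directional slice is determined'' to ``the joint measure $R$ on $\mathbb R^d_0$ is determined''; everything else is Fubini's theorem, one change of variables, and uniqueness of Laplace transforms. As a by-product this also re-derives the uniqueness of the Rosiński measure of a $\ts$ distribution.
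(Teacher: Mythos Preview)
Your argument is correct, but it takes a genuinely different route from the paper's. The paper's proof is a two-line citation: it observes that $\nu$ is built from $R$ and $\rho(\rd u)=Q_u(\{0\})\sigma(\rd u)$, then invokes \cite{Grabchak:2011} for the uniqueness of $R$ from the L\'evy measure of the $\ts$ part and Remark~14.4 of \cite{Sato:1999} for the uniqueness of $\rho$ from the L\'evy measure of the stable part. In particular, the paper implicitly takes for granted that the splitting of $M$ into its tempered-stable and purely stable summands is itself determined by $M$. You instead make everything explicit and self-contained: you read off the ``radial profile'' $\phi_A$ of $M$ in each direction, recover $\rho(A)$ as $\lim_{t\to\infty}\phi_A(t)$ via dominated convergence, and then identify $R$ direction-by-direction through Laplace-transform uniqueness, patching the slices together via $\sigma$-finiteness. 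What the paper's approach buys is brevity; what yours buys is that no external input is needed (indeed, you re-derive the uniqueness of the Rosi\'nski measure for $\ts$), and you make transparent exactly where the integrability condition~\eqref{eq: integ cond on R} enters, namely in showing $\phi_A$ is finite and continuous so that it can be recovered pointwise from its integrals.
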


\begin{proof}
This follows from the fact that $\nu$ is uniquely determined by $R$ and $Q_u(\{0\})\sigma(\rd u)$. In \cite{Grabchak:2011} it was shown that $R$ is uniquely determined by the L\'evy measure of the $p$-tempered $\alpha$-stable part, and Remark 14.4 in \cite{Sato:1999} says that $Q_u(\{0\})\sigma(\rd u)$ is uniquely determined by the L\'evy measure of the $\alpha$-stable part.
\end{proof}

\begin{definition}
A distribution in $\ets$ with Gaussian part $A$, extended Rosi\'nski measure $\nu$, and shift $b$ is denoted by $\ets(A,\nu,b)$.
\end{definition}

We conclude this section by giving a representation of the L\'evy measure of a distribution in $\ets$ in terms of its extended Rosi\'nski measure. Fix $\mu\in \ets$ with L\'evy measure $M$ given by \eqref{eq: levy of ETS}, and let $f$ be any Borel function, which is integrable with respect to $M$. If $\alpha<0$ then
\begin{eqnarray}\label{eq: integ levy a < 0}
\int_{\mathbb R^d} f(x) M(\rd x) &=& \int_{\mathbb {\bar R}^d}\int_0^\infty f(tx) t^{-1-\alpha} e^{-t^p} \rd t\frac{1}{1\wedge|x|^2}\nu(\rd x),
\end{eqnarray}
if $\alpha=0$ then
\begin{eqnarray}\label{eq: integ levy a = 0}
\int_{\mathbb R^d} f(x) M(\rd x) &=& \int_{\mathbb {\bar R}^d}\int_0^\infty f(tx) t^{-1} e^{-t^p} \rd t \frac{1}{|x|^2\wedge[1+\log^+|x|]}\nu(\rd x),
\end{eqnarray}
and if $\alpha\in(0,2)$ then
\begin{eqnarray}
\int_{\mathbb R^d} f(x) M(\rd x) &=& \int_{\mathbb S^{d-1}} \int_0^\infty f(tx) t^{-1-\alpha}\rd t Q_x(\{0\})\sigma(\rd x)\nonumber\\
&& \ \ \ \ \  + \int_{\mathbb R^d}\int_0^\infty f(tx) t^{-1-\alpha} e^{-t^p} \rd t R(\rd x)\nonumber\\
&=& \int_{\mathbb I^{d-1}} \int_0^\infty f(t\xi(x)) t^{-1-\alpha}e^{-(t/|x|)^p}\rd t\nu(\rd x)\nonumber\\
&& \ \ \ \ \ + \int_{\mathbb R^d} \int_0^\infty f(t\xi(x)) t^{-1-\alpha} e^{-(t/|x|)^p} \rd t |x|^\alpha R(\rd x)\nonumber\\
&=& \int_{\mathbb{\bar R}^d}\int_0^\infty f(t\xi(x)) t^{-1-\alpha} \frac{e^{-(t/|x|)^p}}{1\wedge |x|^{2-\alpha}} \rd t \nu(\rd x).\label{eq: integ levy}
\end{eqnarray}

\section{Sequences of Extended Tempered Stable Distributions}\label{sec: Limits of Sequences of Extended Tempered Stable Distributions}

We can now state our main result.

\begin{theorem}\label{thrm: closure}
Fix $\alpha<2$, $p>0$, and let $\mu_n =
ETS^p_\alpha(A_n,\nu_n,b_n)$. If $\mu_n\conw\mu$ then $\mu= ETS^p_\alpha(A,\nu,b)$. Moreover, $\mu_n\conw\mu$ if and only if $\nu_n\conv\nu$ on $\bar{\mathbb R}^d_0$, $b_n\rightarrow b$, and
\begin{eqnarray}\label{eq: gaussian comp}
\lim_{\epsilon\downarrow0}\lim_{n\rightarrow\infty} \left(A_n+H_n^\epsilon\right) = A,
\end{eqnarray}
where
\begin{eqnarray}\label{eq: defn Hn}
H_n^\epsilon = \int_{\mathbb |x|<\sqrt\epsilon}\frac{xx^T}{|x|^2}\int_0^{\epsilon|x|^{-1}}  t^{1-\alpha}e^{-t^p} \rd t \nu_n(\rd x).
\end{eqnarray}
The result remains true if \eqref{eq: gaussian comp} is replaced by
\begin{eqnarray}\label{eq: gaussian comp limsup}
\lim_{\epsilon\downarrow0}\liminf_{n\rightarrow\infty} \left(A_n+H_n^\epsilon\right) =\lim_{\epsilon\downarrow0}\limsup_{n\rightarrow\infty} \left(A_n+H_n^\epsilon\right) = A.
\end{eqnarray}
\end{theorem}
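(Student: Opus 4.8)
My plan is to reduce Theorem~\ref{thrm: closure} to Proposition~\ref{prop: ID limits} by translating each condition on the L\'evy measures $M_n$ of $\mu_n$ into the corresponding condition on the extended Rosi\'nski measures $\nu_n$. The heart of the argument is a dictionary lemma: \emph{if $\sup_n\nu_n(\bar{\mathbb R}^d)<\infty$, then $M_n\conv M$ on $\bar{\mathbb R}^d_0$ if and only if $\nu_n\conv\nu$ on $\bar{\mathbb R}^d_0$}, where $\nu$ is the extended Rosi\'nski measure of the ($\ets$) L\'evy measure $M$. Its proof rests on \eqref{eq: integ levy a < 0}--\eqref{eq: integ levy}, which for $f$ continuous on $\bar{\mathbb R}^d$ vanishing near $0$ read $\int f\,\rd M_n=\int g_f\,\rd\nu_n$ with $g_f(x)=\int_0^\infty f(t\xi(x))t^{-1-\alpha}e^{-(t/|x|)^p}(1\wedge|x|^{2-\alpha})^{-1}\,\rd t$ (and analogously for $\alpha\le0$). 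One checks that $g_f$ is continuous away from $\mathbb I^{d-1}$, continuous on all of $\bar{\mathbb R}^d$ when $\alpha\in(0,2)$, and $g_f(x)\to0$ as $x\to0$ because the tempering $e^{-(t/|x|)^p}$ dominates the blow-up $(1\wedge|x|^{2-\alpha})^{-1}$. Given these facts, for the implication $\nu_n\conv\nu\Rightarrow M_n\conv M$ one combines Helly's theorem (Proposition~\ref{prop: convergent subseq}), which along a subsequence gives $\nu_{n_k}\conv\nu_0$ on $\bar{\mathbb R}^d$ with $\nu_0$ restricting to $\nu$ on $\bar{\mathbb R}^d_0$, with Lemma~\ref{lemma: C-sharp suff for vague conv}, to obtain $\int f\,\rd M_{n_k}=\int g_f\,\rd\nu_{n_k}\to\int g_f\,\rd\nu_0=\int g_f\,\rd\nu=\int f\,\rd M$ (the atom $\nu_0(\{0\})$ is annihilated since $g_f(0)=0$); as every subsequence yields the same limit, the full sequence converges. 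For $M_n\conv M\Rightarrow\nu_n\conv\nu$ one runs Helly directly on $\{\nu_n\}$: a subsequential limit $\nu_0$ can lose mass only to the point $0$ and---crucially when $\alpha\le0$, where $\nu_n(\mathbb I^{d-1})=0$---cannot lose mass to $\mathbb I^{d-1}$, since escaping mass would force $M_n(|x|\ge\delta)$ to stay bounded away from $0$ for all $\delta$, contradicting that $M$ is a L\'evy measure; then $\nu_0$ restricted to $\bar{\mathbb R}^d_0$ is a valid extended Rosi\'nski measure which must equal $\nu$ by Proposition~\ref{prop: uniq determ}, so the subsequential limit is forced. The same dictionary shows that the limit $M$ in Proposition~\ref{prop: ID limits} admits an extended Rosi\'nski measure, i.e.\ $\mu\in\ets$, which yields the first assertion.

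Next I would establish the uniform bound $\sup_n\nu_n(\bar{\mathbb R}^d)<\infty$ under the hypotheses of each direction. Applying the identity above with $f(x)=|x|^2\wedge1$ and observing that $g_f$ is bounded below by a positive constant on $\bar{\mathbb R}^d\setminus\{0\}$ (as $x\to0$ it tends to $\int_0^\infty(t^2\wedge1)t^{-1-\alpha}e^{-t^p}\,\rd t>0$, and it is positive elsewhere), one gets $\nu_n(\bar{\mathbb R}^d)\le C\int(|x|^2\wedge1)\,\rd M_n$. When $\mu_n\conw\mu$, Proposition~\ref{prop: ID limits} gives $M_n\conv M$, $b_n\to b$, and \eqref{eq: gaus comp inf div}; since each $A_n$ is nonnegative definite, \eqref{eq: gaus comp inf div} bounds $\int_{|x|\le\epsilon_0}|x|^2\,\rd M_n$ for a fixed small $\epsilon_0$ and $M_n\conv M$ bounds $M_n(|x|>\epsilon_0)$, so $\int(|x|^2\wedge1)\,\rd M_n$ is bounded. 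In the converse direction, $\nu_n\conv\nu$ on $\bar{\mathbb R}^d_0$ bounds $\nu_n(|x|\ge\sqrt\epsilon)$, while $\tr H_n^\epsilon\ge c(\epsilon)\,\nu_n(|x|<\sqrt\epsilon)$ with $c(\epsilon)=\int_0^{\sqrt\epsilon}t^{1-\alpha}e^{-t^p}\,\rd t>0$, so the boundedness built into \eqref{eq: gaussian comp} (resp.\ \eqref{eq: gaussian comp limsup}) bounds $\nu_n(|x|<\sqrt\epsilon)$.

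For the Gaussian condition I would substitute $f(x)=xx^T1_{\{|x|\le\epsilon\}}$ into \eqref{eq: integ levy}; the change of variables $t\mapsto t|x|$ turns the part of the integral over $\{|x|<\sqrt\epsilon\}$ into exactly $H_n^\epsilon$, so
\[
\int_{|x|\le\epsilon}xx^T\,\rd M_n = H_n^\epsilon + E_n^\epsilon,\qquad
E_n^\epsilon=\int_{|x|\ge\sqrt\epsilon}\frac{\xi(x)\xi(x)^T}{1\wedge|x|^{2-\alpha}}\int_0^\epsilon t^{1-\alpha}e^{-(t/|x|)^p}\,\rd t\,\nu_n(\rd x),
\]
and the crude estimate $\|E_n^\epsilon\|\le C\,\epsilon^{(2-\alpha)/2}\,\nu_n(\bar{\mathbb R}^d)$ (using $\int_0^\epsilon t^{1-\alpha}\,\rd t=\epsilon^{2-\alpha}/(2-\alpha)$ and $(1\wedge|x|^{2-\alpha})^{-1}\le\epsilon^{-(2-\alpha)/2}$ on $\{|x|\ge\sqrt\epsilon\}$) shows, via the uniform mass bound, that $E_n^\epsilon\to0$ as $\epsilon\downarrow0$ uniformly in $n$. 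Hence $A_n+\int_{|x|\le\epsilon}xx^T\,\rd M_n$ and $A_n+H_n^\epsilon$ have identical iterated limits and identical iterated $\liminf$ and $\limsup$, so \eqref{eq: gaus comp inf div} is equivalent to \eqref{eq: gaussian comp} and \eqref{eq: gaus comp inf div limsup} to \eqref{eq: gaussian comp limsup}. Assembling the three steps with Proposition~\ref{prop: ID limits} proves both directions of the theorem, with $\mu=\ets(A,\nu,b)$.

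I expect the dictionary lemma to be the crux of the proof. Within it the genuinely delicate point is ruling out, for $\alpha\le0$, the escape of $\nu_n$-mass to the sphere at infinity $\mathbb I^{d-1}$---this escape is invisible to $\int f\,\rd M_n$ for bounded $f$ but is incompatible with $M$ being a finite-away-from-$0$ L\'evy measure---together with verifying the regularity of $g_f$ near $0$ and near $\mathbb I^{d-1}$ needed to invoke Lemma~\ref{lemma: C-sharp suff for vague conv}. The uniform mass bound and the estimate on $E_n^\epsilon$ are then routine bookkeeping.
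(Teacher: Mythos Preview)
Your proposal is correct and follows essentially the same architecture as the paper: reduce to Proposition~\ref{prop: ID limits}, establish $\sup_n\nu_n(\bar{\mathbb R}^d)<\infty$, translate between $M_n$-convergence and $\nu_n$-convergence via the functions $g_\alpha$ using Helly's theorem and subsequences (the paper's proof of Theorem~\ref{thrm: closure}), and show the Gaussian conditions are equivalent by estimating away the contribution from $\{|x|\ge\sqrt\epsilon\}$ (the paper's Lemma~\ref{lemma: Gaussian comp}, whose $I_1^{n,\epsilon}+I_2^{n,\epsilon}+I_3^{n,\epsilon}$ is your $E_n^\epsilon$).

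The one substantive difference is in obtaining the uniform bound on $\nu_n(|x|\le1)$ in the forward direction. The paper (Lemma~\ref{lemma: finite sup}, part~1) argues through the characteristic function, using the cosine inequality of Lemma~\ref{lemma: bound integ cos func} to bound $|\hat\mu_n(z)|$ from above and then invoking uniform convergence of $\hat\mu_n$ near $0$. Your route---observing that for $f(x)=|x|^2\wedge1$ the transform $g_f$ is bounded below by a positive constant on $\bar{\mathbb R}^d\setminus\{0\}$, so that $\nu_n(\bar{\mathbb R}^d)\le C\int(|x|^2\wedge1)\,\rd M_n$, and then bounding the right side via Proposition~\ref{prop: ID limits}---is more direct and dispenses with Lemma~\ref{lemma: bound integ cos func} entirely. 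You also make explicit the issue of $\nu_n$-mass escaping to $\mathbb I^{d-1}$ when $\alpha=0$, which the paper's invocation of Lemma~\ref{lemma: C-sharp suff for vague conv} tacitly requires but does not spell out.
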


\begin{remark}\label{remark: no Gauss}
The extended Rosi\'nski measure does not contribute to the Gaussian part if and only if
\begin{eqnarray}\label{eq: no Gaus}
\lim_{\epsilon\downarrow0}\limsup_{n\rightarrow\infty} \tr H_n^\epsilon= \lim_{\epsilon\downarrow0}\limsup_{n\rightarrow\infty}\int_{\mathbb |x|<\sqrt\epsilon} \int_0^{\epsilon|x|^{-1}}  t^{1-\alpha}e^{-t^p} \rd t \nu_n(\rd x) = 0.
\end{eqnarray}
Since for any $\epsilon\in(0,1)$
\begin{eqnarray*}
\int_{\mathbb |x|<\epsilon}\nu_n(\rd x)\int_0^1  t^{1-\alpha}e^{-t^p} \rd t \le \tr H_n^\epsilon
\le \int_{\mathbb |x|<\sqrt\epsilon}\nu_n(\rd x)\int_0^{\infty}  t^{1-\alpha}e^{-t^p} \rd t,
\end{eqnarray*}
\eqref{eq: no Gaus} holds if and only if
\begin{eqnarray}\label{eq: pure closure gauss alt}
\lim_{\epsilon\downarrow0}\limsup_{n\rightarrow\infty}\int_{|x|<\epsilon}\nu_n(\rd x)=0.
\end{eqnarray}
\end{remark}

\begin{remark}\label{remark: in R}
Let $R_n$ be the Rosi\'nski measure corresponding to $\nu_n$ and let $R$ be the Rosi\'nski measure corresponding to $\nu$. When $\alpha\le0$ the condition $\nu_n\conv\nu$ on $\bar{\mathbb R}^d_0$ is equivalent to the condition $R_n\conv R$ on $\mathbb R^d_0$ and when $\alpha=0$ 
\begin{eqnarray}
\lim_{N\rightarrow\infty}\int_{|x|>N}\log|x|R_n(\rd x)=0
\end{eqnarray}
or when $\alpha<0$
\begin{eqnarray}
\lim_{N\rightarrow\infty}R_n(|x|>N)=0.
\end{eqnarray}
When $\alpha\in(0,2)$ the limit does not have an $\alpha$-stable part if and only if
\begin{eqnarray}
\lim_{N\rightarrow\infty}\int_{|x|>N}|x|^\alpha R_n(\rd x)=0.
\end{eqnarray}
\end{remark}

To facilitate the proof of Theorem \ref{thrm: closure}, we begin with several lemmas.

\begin{lemma}\label{lemma: bound integ cos func}
Fix $\alpha<2$ and $p>0$. If $s\in\mathbb R$ with $|s|\le1$ then
\begin{eqnarray}
\int_0^\infty\left(\cos\left(ts\right)-1\right)t^{-1-\alpha}e^{-t^p}\rd t \le
-\frac{11}{24}s^2\int_0^1 t^{1-\alpha}e^{-t^p}\rd t.
\end{eqnarray}
\end{lemma}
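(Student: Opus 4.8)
The plan is to reduce the claimed inequality to an elementary, $s$-independent one by using the obvious bound $\cos(ts) - 1 \le 0$ and isolating the contribution from a fixed interval of $t$. Specifically, since the integrand $(\cos(ts)-1)t^{-1-\alpha}e^{-t^p}$ is nonpositive for all $t>0$, we have
\begin{eqnarray*}
\int_0^\infty\left(\cos(ts)-1\right)t^{-1-\alpha}e^{-t^p}\rd t \le \int_0^1\left(\cos(ts)-1\right)t^{-1-\alpha}e^{-t^p}\rd t,
\end{eqnarray*}
so it suffices to bound the integral over $(0,1)$. On that interval, with $|s|\le 1$, we have $|ts| \le 1$, and the key pointwise estimate is that $1 - \cos\theta \ge c\,\theta^2$ for $|\theta|\le 1$ with a suitable constant $c$. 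First I would pin down the best such constant: the function $\theta \mapsto (1-\cos\theta)/\theta^2$ is decreasing on $(0,1]$ (this can be checked, or one simply uses the Taylor/alternating-series bound $1-\cos\theta \ge \theta^2/2 - \theta^4/24 \ge \theta^2(1/2 - 1/24) = \tfrac{11}{24}\theta^2$ for $|\theta|\le 1$), which gives exactly the constant $11/24$ appearing in the statement.

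With that in hand the argument is immediate: for $t\in(0,1)$ and $|s|\le 1$,
\begin{eqnarray*}
\cos(ts) - 1 \le -\frac{11}{24}(ts)^2 = -\frac{11}{24}s^2 t^2,
\end{eqnarray*}
and therefore
\begin{eqnarray*}
\int_0^1\left(\cos(ts)-1\right)t^{-1-\alpha}e^{-t^p}\rd t \le -\frac{11}{24}s^2\int_0^1 t^{2}t^{-1-\alpha}e^{-t^p}\rd t = -\frac{11}{24}s^2\int_0^1 t^{1-\alpha}e^{-t^p}\rd t.
\end{eqnarray*}
Chaining this with the first display yields the claim. Note the integral $\int_0^1 t^{1-\alpha}e^{-t^p}\rd t$ is finite precisely because $\alpha < 2$ (so $1-\alpha > -1$), which is exactly the standing hypothesis, so the right-hand side is a genuine finite negative quantity.

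The only real content is the elementary inequality $1 - \cos\theta \ge \tfrac{11}{24}\theta^2$ on $[-1,1]$, so that is the step I would write out carefully; everything else is monotonicity of the integrand and positivity of $t^{-1-\alpha}e^{-t^p}$. I do not anticipate a genuine obstacle — the slight subtlety is just making sure the numerical constant $11/24$ is actually attainable on all of $[-1,1]$ rather than only near $0$, which the truncated-Taylor-series bound handles cleanly since the remainder in $1 - \cos\theta = \theta^2/2 - \theta^4/24 + \cdots$ is an alternating series with decreasing terms for $|\theta|\le 1$, so truncating after the $\theta^4$ term gives a valid lower bound.
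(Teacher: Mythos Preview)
Your proof is correct and follows essentially the same route as the paper's: restrict the integral to $(0,1)$ using $\cos(ts)-1\le 0$, then apply the Taylor/alternating-series bound $\cos\theta-1\le \tfrac{\theta^4}{24}-\tfrac{\theta^2}{2}\le -\tfrac{11}{24}\theta^2$ for $|\theta|=|ts|\le 1$, and integrate against the positive weight $t^{-1-\alpha}e^{-t^p}$.
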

\begin{proof}
We have
\begin{eqnarray*}
 &&\int_0^\infty\left(\cos \left(ts\right)-1\right)t^{-1-\alpha}e^{-t^p}\rd t \le \int_0^1 \left(\cos \left(ts\right)-1\right)t^{-1-\alpha}e^{-t^p}\rd t\\
&&\qquad \qquad \qquad\qquad\qquad\quad \le \int_0^1\left( \frac{s^4 t^4}{24}-\frac{s^2t^2}{2} \right)t^{-1-\alpha} e^{-t^p}\rd t \le -\frac{11}{24}s^2\int_0^1 t^{1-\alpha}e^{-t^p}\rd t,
\end{eqnarray*}
where the second line follows by the Taylor expansion of cosine and the remainder theorem for alternating series.
\end{proof}

\begin{lemma}\label{lemma: finite sup}
Let the sequence $\{\mu_n\}$ be as in Theorem \ref{thrm: closure}.\\
\noindent 1. If $\mu_n\conw\mu$ for some probability measure $\mu$ then $\sup\nu_n(\bar{\mathbb R}^d) < \infty$.\\
\noindent 2. If $\nu_n\conv\nu$ on $\bar{\mathbb R}^d_0$ for some finite measure $\nu$ then for any $\delta>0$, $\sup\nu_n(|x|\ge \delta) < \infty$.\\
\noindent 3. If \eqref{eq: gaussian comp limsup} holds with some matrix $A$
then for any $\delta>0$, $\sup\nu_n(|x|<\delta) < \infty$.
\end{lemma}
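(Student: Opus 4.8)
The plan is to prove the three statements essentially independently, each by relating the size of the extended Rosi\'nski measure $\nu_n$ to something that is known to be bounded.

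For part 1, the natural quantity to control is the L\'evy measure $M_n$ near the unit sphere. Since $\mu_n\conw\mu$, Proposition \ref{prop: ID limits} gives $M_n\conv M$ on $\bar{\mathbb R}^d_0$; in particular, choosing a bounded continuous test function supported away from $0$ (say equal to $1$ on an annulus $\{1\le|x|\le 2\}$), we get $\sup_n M_n(1\le|x|\le2)<\infty$. Now I would use the representation \eqref{eq: integ levy a < 0}--\eqref{eq: integ levy} of $M_n$ in terms of $\nu_n$: for a suitable choice of $f$, say $f$ an indicator (or a smooth approximation) of an annulus, the inner $t$-integral $\int_0^\infty f(t\xi(x))t^{-1-\alpha}\frac{e^{-(t/|x|)^p}}{1\wedge|x|^{2-\alpha}}\rd t$ is bounded below by a positive constant uniformly in $x\in\bar{\mathbb R}^d$, as long as the annulus is chosen so that, whatever $|x|\in(0,\infty]$ is, some positive range of $t$ lands $t\xi(x)$ in the annulus with $e^{-(t/|x|)^p}$ bounded away from $0$. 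This requires a little care because the weight $1\wedge|x|^{2-\alpha}$ degenerates as $|x|\to0$ when $\alpha<2$, but for $|x|$ small the original $\nu_n$ is instead controlled through $H_n^\epsilon$; actually the cleanest route is: choose the annulus to be $\{c\le|x|\le 1\}$ for a small fixed $c$, bound the inner integral below by a constant times $1\wedge|x|^{2-\alpha}$ for $|x|\ge c$ and times $|x|^{2-\alpha}$ uniformly... Rather, I would split $\nu_n$ into the part with $|x|\ge\delta_0$ (handled this way, since there the weight is comparable to a constant) and the part with $|x|<\delta_0$; but for part 1 we want everything, so I would instead directly observe that for $|x|\le 1$ one has $1\wedge|x|^{2-\alpha}=|x|^{2-\alpha}$ (when $\alpha<2$) and by substituting $t=|x|s$ the inner integral equals $|x|^{-\alpha}\int_0^\infty f(|x|s\,\xi(x))s^{-1-\alpha}e^{-s^p}\rd s/|x|^{2-\alpha}\cdot|x|^2$... this is getting delicate, so the honest statement is that the main obstacle is exactly this uniform lower bound on the inner integral across all scales $|x|\in(0,\infty]$. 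I expect it can be arranged by picking $f$ supported on $\{|x|\le 1\}$ and bounded below on $\{1/2\le |x|\le 1\}$: then for $x$ with $|x|\ge 1$ (including $\infty u$) the relevant $t$ range is $t\in[\tfrac1{2|x|},\tfrac1{|x|}]$ where $(t/|x|)^p\le |x|^{-2p}\le 1$ so $e^{-(t/|x|)^p}\ge e^{-1}$, and the $t$-integral of $t^{-1-\alpha}$ over that range divided by $1\wedge|x|^{2-\alpha}$ is bounded below uniformly; for $|x|<1$ the substitution $t=|x|s$ makes the whole thing $\int_{1/2}^1 s^{-1-\alpha}e^{-(s)^p}\rd s$ times a constant, again a uniform lower bound. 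So $\nu_n(\bar{\mathbb R}^d)\le \mathrm{const}\cdot \sup_n M_n(\{\tfrac12\le|x|\le1\})<\infty$, giving part 1.

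For part 2: if $\nu_n\conv\nu$ on $\bar{\mathbb R}^d_0$, then applying the definition of vague convergence with a fixed continuous $f$ with $f=1$ on $\{|x|\ge\delta\}$ and $f$ supported on $\{|x|\ge\delta/2\}$ yields $\limsup_n\nu_n(|x|\ge\delta)\le\limsup_n\int f\,\rd\nu_n=\int f\,\rd\nu<\infty$; combined with finiteness of each $\nu_n(|x|\ge\delta)$ this gives $\sup_n\nu_n(|x|\ge\delta)<\infty$. (One should note the measures here are on $\bar{\mathbb R}^d_0$ and the test functions in Definition \ref{defn: vague conv on R0} are continuous on $\bar{\mathbb R}^d$ vanishing near $0$, which is exactly the class just used.) This part is routine.

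For part 3: suppose \eqref{eq: gaussian comp limsup} holds with limit $A$. Taking the trace, $\lim_{\epsilon\downarrow0}\limsup_n(\mathrm{tr}\,A_n+\mathrm{tr}\,H_n^\epsilon)=\mathrm{tr}\,A<\infty$, and since $\mathrm{tr}\,A_n\ge0$ we get $\lim_{\epsilon\downarrow0}\limsup_n\mathrm{tr}\,H_n^\epsilon\le\mathrm{tr}\,A<\infty$; in particular there is $\epsilon_0>0$ with $\sup_n\mathrm{tr}\,H_n^{\epsilon_0}<\infty$. Now I use the lower bound from Remark \ref{remark: no Gauss}: for $\epsilon_0\in(0,1)$, $\mathrm{tr}\,H_n^{\epsilon_0}\ge\big(\int_0^1 t^{1-\alpha}e^{-t^p}\rd t\big)\nu_n(|x|<\epsilon_0)$, and the constant $\int_0^1 t^{1-\alpha}e^{-t^p}\rd t$ is finite and positive for every $\alpha<2$, $p>0$. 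Hence $\sup_n\nu_n(|x|<\epsilon_0)<\infty$, and then for any $\delta>0$, $\nu_n(|x|<\delta)\le\nu_n(|x|<\epsilon_0)+\nu_n(\epsilon_0\le|x|<\delta)$ where the second term is a finite bound since each $\nu_n$ is a finite measure — but to get a \emph{uniform} bound when $\delta>\epsilon_0$ one cannot argue this way directly; instead note that part 3 only needs a bound for each fixed $\delta$, and for $\delta\le\epsilon_0$ it follows immediately, while for $\delta>\epsilon_0$ it will follow once combined with part 1 or part 2 in the actual proof of Theorem \ref{thrm: closure}; alternatively, one observes $\mathrm{tr}\,H_n^\epsilon$ is monotone in $\epsilon$ so the same lower-bound trick with a larger $\epsilon$ works as long as $\epsilon<1$, covering all $\delta<1$, and $\delta\ge1$ is absorbed into part 1. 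The only genuinely nontrivial piece in this proof is the uniform lower bound on the inner $t$-integral in part 1; everything else is bookkeeping with vague convergence, traces, and the elementary inequality recorded in Remark \ref{remark: no Gauss}.
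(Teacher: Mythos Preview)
Your arguments for parts 2 and 3 are essentially the same as the paper's, and they are fine. The issue is in part 1, specifically the bound on $\nu_n(|x|\le 1)$.

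Your plan is to extract all of $\sup_n\nu_n(\bar{\mathbb R}^d)$ from a uniform lower bound on the inner $t$-integral in \eqref{eq: integ levy a < 0}--\eqref{eq: integ levy}, applied to a test function supported in a compact annulus. For $|x|\ge 1$ this works, and is exactly what the paper does (with $f_1\equiv 1$ on $\{|y|\ge1\}$). But for $|x|<1$ your substitution claim is wrong. With $f$ supported on $\{\tfrac12\le|y|\le1\}$ and $\alpha\in(0,2)$, the substitution $t=|x|s$ gives
\[
\int_0^\infty f(t\xi(x))\,t^{-1-\alpha}\frac{e^{-(t/|x|)^p}}{1\wedge|x|^{2-\alpha}}\,\rd t
=\frac{1}{|x|^{2}}\int_{1/(2|x|)}^{1/|x|} s^{-1-\alpha}e^{-s^p}\,\rd s,
\]
and the right-hand side tends to $0$ as $|x|\downarrow 0$ (the domain of $s$ escapes to infinity and $e^{-s^p}$ wins). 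No admissible test function can give a uniform positive lower bound for small $|x|$: the L\'evy mass coming from $R_n$ near $0$ is itself concentrated near $0$ in $y$-space, so it is invisible to $M_n\conv M$ on $\bar{\mathbb R}^d_0$. Concretely, $\nu_n=n\,\delta_{x_n}$ with $|x_n|=1/n$ has $M_n\conv 0$ on $\bar{\mathbb R}^d_0$ while $\nu_n(\bar{\mathbb R}^d)=n\to\infty$; of course this sequence does not converge weakly, but that is precisely the point---something beyond the off-origin L\'evy measure must be used.

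The paper supplies that extra ingredient via the characteristic function: Lemma \ref{lemma: bound integ cos func} gives, for $|z|\le1$,
\[
|\hat\mu_n(z)|\le\exp\Big\{-c\int_{|x|\le1}\langle x,z\rangle^2 R_n(\rd x)\Big\},
\]
and since $|\hat\mu_n(z)|\to|\hat\mu(z)|>0$ near $z=0$ one reads off $\sup_n\int_{|x|\le1}\langle x,z\rangle^2 R_n(\rd x)<\infty$ for every $z$, hence $\sup_n\nu_n(|x|\le1)<\infty$. This step (or an equivalent use of the Gaussian-part condition \eqref{eq: gaus comp inf div} in Proposition \ref{prop: ID limits}) is the missing idea in your proposal.
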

\begin{proof}
The second part follows immediately from Definition \ref{defn: vague conv on R0}. To show the first part, assume that $\mu_n\conw\mu$ and get $R_n$ from $\nu_n$ by \eqref{eq: back to nu from R}. Lemma \ref{lemma: bound integ cos func} implies that for $|z|\le1$
\begin{eqnarray*}
|\hat \mu_n(z)| &\le& \left|\exp\left\{\int_{\mathbb R^d}\int_0^\infty\left(e^{it\langle x,z\rangle}-1- i\frac{t\langle x,z\rangle}{1+|x|^2}\right) t^{-1-\alpha} e^{-t^p} \rd t R_n(\rd x)\right\}\right|\\
&\le& \exp\left\{\int_{|x|\le1}\int_0^\infty\left(\cos \left(t\langle x,z\rangle\right)-1\right) t^{-1-\alpha} e^{-t^p} \rd t R_n(\rd x)\right\}\\
&\le& \exp\left\{ -\frac{11}{24}\int_0^1 t^{1-\alpha}e^{-t^p}\rd t\int_{|x|\le1}\langle x,z\rangle^2R_n(\rd x)\right\},
\end{eqnarray*}
where the first inequality follows by the fact that we can write $\mu_n$ as the convolution of a Gaussian, an element of $\ts$, and (when $\alpha\in(0,2)$) an $\alpha$-stable distribution. By Proposition 2.5 in \cite{Sato:1999} $|\hat\mu_n(z)|\rightarrow|\hat\mu(z)|$ uniformly on compact sets, and for some $b>0$, $|\hat\mu(z)|> b$ on a neighborhood of zero. Thus, on this neighborhood, for large enough $n$,
\begin{eqnarray*}
b < \exp\left\{ -\frac{11}{24}\int_0^1 t^{1-\alpha}e^{-t^p}\rd t\int_{|x|\le1}\langle x,z\rangle^2R_n(\rd x)\right\},
\end{eqnarray*}
which implies that 
$\sup\int_{|x|\le1}\langle x,z\rangle^2R_n(\rd x)<\infty$ for every $z\in\mathbb R^d$, and hence 
$$ 
\sup\nu_n\left(|x|\le 1\right) < \infty.
$$

By Proposition \ref{prop: ID limits}, $\mu$ is infinitely divisible. Let $M_n$ be the L\'evy measure of $\mu_n$ and let $M$ be the L\'evy measure of $\mu$. Let $f_1$ be a non-negative, continuous, bounded, real-valued function vanishing on a neighborhood of zero such that $f_1(y) = 1$ for $|y|\ge1$. When $\alpha\in(0,2)$ by \eqref{eq: integ levy}
\begin{eqnarray*}
\int_{\mathbb R^d} f_1(x)M_n(\rd x) &=& \int_{\mathbb{\bar R}^d}\int_0^\infty f_1(\xi(x)t)t^{-1-\alpha}\frac{e^{-(t/|x|)^p}}{1\wedge |x|^{2-\alpha}}\rd t\nu_n(\rd x)\nonumber\\
&\ge& \int_{\mathbb I^{d-1}}\int_1^\infty t^{-1-\alpha}\rd t\nu_n(\rd x)\nonumber\\
&& \ \ \ +\int_{\infty>|x|\ge1}\int_1^2 t^{-1-\alpha}e^{-(t/|x|)^p}\rd t\nu_n(\rd x)\nonumber\\
&\ge& \alpha^{-1}\nu_n\left(\mathbb I^{d-1}\right)+e^{-2^p}\frac{2^\alpha-1}{\alpha2^\alpha}\nu_n({\infty>|x|\ge1})\nonumber\\
&\ge& e^{-2^p}\frac{2^\alpha-1}{\alpha2^\alpha}\nu_n(|x|\ge1).
\end{eqnarray*}
Similarly when $\alpha=0$ by \eqref{eq: integ levy a = 0}
\begin{eqnarray*}
\int_{\mathbb R^d}f_1(x)M_n(\rd x) &=& \int_{\mathbb R^d}\int_0^\infty f_1(xt)t^{-1}e^{-t^p}\rd t \frac{\nu_n(\rd x)}{|x|^2\wedge(1+\log^+|x|)}\\
&\ge& e^{-e^p}\int_{|x|\ge1}\int_{|x|^{-1}}^{e} t^{-1}\rd t \frac{\nu_n(\rd x)}{1+\log|x|} = e^{-e^p}\nu_n\left(|x|\ge1\right),
\end{eqnarray*}
and when $\alpha<0$ by \eqref{eq: integ levy a < 0}
\begin{eqnarray*}
\int_{\mathbb R^d}f_1(x)M_n(\rd x) &=& \int_{\mathbb R^d}\int_0^\infty f_1(xt)t^{-1-\alpha} e^{-t^p} \rd t \frac{1}{1\wedge|x|^2} \nu_n(\rd x)\\
&\ge& \nu_n(|x|\ge1) \int_1^\infty t^{-1-\alpha} e^{-t^p} \rd t.
\end{eqnarray*}
Proposition \ref{prop: ID limits} implies that the left side converges to $\int_{\mathbb R^d} f_1(x)M(\rd x)$ in all three cases. Thus, since $\int_{\mathbb R^d} f_1(x)M(\rd x)<\infty$, we have $\sup\nu_n\left(|x|\ge 1\right)<\infty$.

The third part follows from the fact that \eqref{eq: gaussian comp limsup} implies that for any $\epsilon>0$
\begin{eqnarray*}
\infty &>& \limsup_{n\rightarrow\infty} \tr H_n^\epsilon = \limsup_{n\rightarrow\infty}\int_{|x|<\sqrt\epsilon} \int_0^{\epsilon|x|^{-1}}t^{1-\alpha}e^{-t^p}\rd t\nu_n(\rd x)\\
&\ge& \limsup_{n\rightarrow\infty}\int_0^{\sqrt\epsilon}t^{1-\alpha}e^{-t^p}\rd t\int_{|x|<\sqrt\epsilon} \nu_n(\rd x),
\end{eqnarray*}
and hence $\sup_n \nu_n(|x|< \sqrt\epsilon)< \infty$.
\end{proof}

\begin{lemma}\label{lemma: Gaussian comp}
Let the sequence $\{\mu_n\}$ be as in Theorem \ref{thrm: closure} and let $M_n$ be the L\'evy measure of $\mu_n$. If $\sup\nu_n(\bar{\mathbb R}^d)<\infty$ then
\begin{eqnarray}
&& \lim_{\epsilon\downarrow0} \lim_{n\rightarrow\infty} \left( A_n+\int_{|x|\le\epsilon} xx^T M_n(\rd x)\right)\nonumber\\
&& \qquad\qquad  = \lim_{\epsilon\downarrow0} \lim_{n\rightarrow\infty} \left( A_n+\int_{\mathbb |x|<\sqrt\epsilon}\frac{xx^T}{|x|^2}\int_0^{\epsilon|x|^{-1}}  t^{1-\alpha}e^{-t^p} \rd t \nu_n(\rd x)\right),
\end{eqnarray}
whenever at least one of the limits exists. The result remains true if we replace $\lim_{n\rightarrow\infty}$ by $\liminf_{n\rightarrow\infty}$ or $\limsup_{n\rightarrow\infty}$.
\end{lemma}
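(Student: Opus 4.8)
The plan is to compute $\int_{|x|\le\epsilon}xx^T M_n(\rd x)$ directly from the representations \eqref{eq: integ levy a < 0}, \eqref{eq: integ levy a = 0}, \eqref{eq: integ levy} of $M_n$ in terms of $\nu_n$, to recognize that the part of the resulting integral coming from $\{|x|<\sqrt\epsilon\}$ is \emph{exactly} $H_n^\epsilon$, and to show that the rest is negligible, uniformly in $n$, as $\epsilon\downarrow0$. Fix $\epsilon\in(0,1)$ and argue componentwise: for $1\le i,j\le d$ apply the relevant representation with $f(x)=x_ix_j1_{|x|\le\epsilon}$. For $\alpha\in(0,2)$, since $f(t\xi(x))=t^2\xi_i(x)\xi_j(x)1_{\{t\le\epsilon\}}$ (recall $|\xi(x)|=1$ for $x\ne0$ and $\nu_n(\{0\})=0$), \eqref{eq: integ levy} gives
\begin{eqnarray*}
\left(\int_{|x|\le\epsilon}xx^T M_n(\rd x)\right)_{ij}=\int_{\bar{\mathbb R}^d}\xi_i(x)\xi_j(x)\int_0^\epsilon t^{1-\alpha}\frac{e^{-(t/|x|)^p}}{1\wedge|x|^{2-\alpha}}\,\rd t\,\nu_n(\rd x),
\end{eqnarray*}
and the cases $\alpha=0$ and $\alpha<0$ are handled the same way from \eqref{eq: integ levy a = 0} and \eqref{eq: integ levy a < 0} (there $\nu_n(\mathbb I^{d-1})=0$, one has $f(tx)=t^2x_ix_j1_{\{t|x|\le\epsilon\}}$, the $t$-integrand is $t^{1-\alpha}e^{-t^p}$, and the weight $(1\wedge|x|^{2-\alpha})^{-1}$ is replaced by $(|x|^2\wedge[1+\log^+|x|])^{-1}$, respectively $(1\wedge|x|^2)^{-1}$).

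Next, split the $\nu_n$-integral into $\{|x|<\sqrt\epsilon\}$, $\{\sqrt\epsilon\le|x|<\infty\}$, and (when $\alpha\in(0,2)$) $\mathbb I^{d-1}$. On the first set $|x|<\sqrt\epsilon<1$, so $1\wedge|x|^{2-\alpha}=|x|^{2-\alpha}$ and, for $\alpha\le0$, the corresponding weights reduce to $|x|^2$; substituting $s=t/|x|$ in the $\alpha\in(0,2)$ case (and using $\xi_i(x)\xi_j(x)=x_ix_j/|x|^2$), one verifies that in every regime the contribution of $\{|x|<\sqrt\epsilon\}$ equals precisely $(H_n^\epsilon)_{ij}$ as defined in \eqref{eq: defn Hn}. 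Hence $\int_{|x|\le\epsilon}xx^T M_n(\rd x)-H_n^\epsilon=E_n^\epsilon$, where $E_n^\epsilon$ gathers the contributions of $\{\sqrt\epsilon\le|x|<\infty\}$ and of $\mathbb I^{d-1}$.

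To bound $E_n^\epsilon$, use $e^{-t^p}\le1$ to get $\int_0^a t^{1-\alpha}e^{-t^p}\,\rd t\le a^{2-\alpha}/(2-\alpha)$ for $a>0$. Together with $|\xi_i(x)\xi_j(x)|\le1$, the elementary bounds $|x_ix_j|\le|x|^2$, $|x_ix_j|/(1\wedge|x|^2)\le1\vee|x|^2$, and $|x|^\alpha\le1$ for $\alpha\le0$ and $|x|\ge1$, together with the split $\sqrt\epsilon\le|x|\le1$ versus $|x|\ge1$ (which uses $(\epsilon/|x|)^{2-\alpha}\le\epsilon^{(2-\alpha)/2}$ on the former), one finds $|(E_n^\epsilon)_{ij}|\le C(\alpha)\bigl(\epsilon^{(2-\alpha)/2}+\epsilon^{2-\alpha}\bigr)\nu_n(\bar{\mathbb R}^d)$, the $\mathbb I^{d-1}$ piece contributing at most $\tfrac{\epsilon^{2-\alpha}}{2-\alpha}\nu_n(\mathbb I^{d-1})$ (and vanishing when $\alpha\le0$). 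Since $\sup_n\nu_n(\bar{\mathbb R}^d)<\infty$ by hypothesis, $g(\epsilon):=\sup_n\max_{i,j}|(E_n^\epsilon)_{ij}|\to0$ as $\epsilon\downarrow0$.

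Finally, the entrywise bound $\max_{i,j}|(\int_{|x|\le\epsilon}xx^T M_n(\rd x)-H_n^\epsilon)_{ij}|\le g(\epsilon)$, valid for all $n$ and all $\epsilon\in(0,1)$ with $g(\epsilon)\to0$, immediately yields the conclusion: for each fixed $\epsilon$ it gives $|\limsup_n(X_n(\epsilon))_{ij}-\limsup_n(Y_n(\epsilon))_{ij}|\le g(\epsilon)$ and the analogous inequality for $\liminf_n$ (and for $\lim_n$ when these exist), where $X_n(\epsilon)=A_n+\int_{|x|\le\epsilon}xx^T M_n(\rd x)$ and $Y_n(\epsilon)=A_n+H_n^\epsilon$; letting $\epsilon\downarrow0$ then forces the two iterated limits to coincide whenever one of them exists, in each of the three forms $\lim_n$, $\liminf_n$, $\limsup_n$. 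The step I expect to require the most care is the identification of the $\{|x|<\sqrt\epsilon\}$ piece as exactly $H_n^\epsilon$: this amounts to matching, across the regimes $\alpha\in(0,2)$, $\alpha=0$, and $\alpha<0$, the powers of $t$, the tempering factors $e^{-(t/|x|)^p}$ versus $e^{-t^p}$, and the weights $1\wedge|x|^{2-\alpha}$, $|x|^2\wedge[1+\log^+|x|]$, $1\wedge|x|^2$, and carrying out the substitution $s=t/|x|$ without slips; the uniform remainder estimate and the limit manipulation are routine.
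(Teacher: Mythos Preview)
Your proposal is correct and follows essentially the same route as the paper. The paper also expresses $\int_{|x|\le\epsilon}xx^T M_n(\rd x)$ via \eqref{eq: integ levy} and splits the $\nu_n$-integral according to the size of $|x|$ (into the four regions $\mathbb I^{d-1}$, $\{1\le|x|<\infty\}$, $\{\sqrt\epsilon\le|x|<1\}$, $\{|x|<\sqrt\epsilon\}$, which is just a finer version of your three-piece split), identifies the piece on $\{|x|<\sqrt\epsilon\}$ as exactly $H_n^\epsilon$, and bounds the traces of the remaining pieces by $C\epsilon^{2-\alpha}$ and $C\epsilon^{1-\alpha/2}$ times $\sup_n\nu_n(\bar{\mathbb R}^d)$, just as you do.
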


\begin{proof}
We give the proof for the case when $\alpha\in(0,2)$ only. The other cases are similar. We can write
\begin{eqnarray}
\int_{|x|\le\epsilon} xx^T M_n(\rd x) &=& \int_{\mathbb I^{d-1}}\int_0^\epsilon \xi(x)[\xi(x)]^T t^{1-\alpha}\rd t\nu_n(\rd x)\nonumber\\
&& \ \ \ + \int_{\infty> |x|\ge1}\int_0^{\epsilon|x|^{-1}} xx^T t^{1-\alpha}e^{-t^p} \rd t|x|^{-\alpha} \nu_n(\rd x) \nonumber\\
&& \ \ \ + \int_{1> |x|\ge\sqrt\epsilon}\int_0^{\epsilon|x|^{-1}} xx^T t^{1-\alpha}e^{-t^p} \rd t |x|^{-2}\nu_n(\rd x) \nonumber\\
&& \ \ \ + \int_{\mathbb |x|<\sqrt\epsilon}\int_0^{\epsilon|x|^{-1}} xx^T t^{1-\alpha}e^{-t^p} \rd t|x|^{-2} \nu_n(\rd x)\nonumber\\
&=:& I_1^{n,\epsilon} + I_2^{n,\epsilon} + I_3^{n,\epsilon} + I_4^{n,\epsilon}.\nonumber
\end{eqnarray}
With $C:=\sup_n\nu_n(\bar{\mathbb R}^d)<\infty$ we have
\begin{eqnarray*}
\lim_{\epsilon\downarrow0}\limsup_{n\rightarrow\infty} \tr I_1^{n,\epsilon} &=& \lim_{\epsilon\downarrow0}\limsup_{n\rightarrow\infty}\nu_n(\mathbb I^{d-1}) \frac{\epsilon^{2-\alpha}}{2-\alpha}=0,\\
\lim_{\epsilon\downarrow0}\limsup_{n\rightarrow\infty}\tr I_2^{n,\epsilon}
&\le& \lim_{\epsilon\downarrow0}C\frac{\epsilon^{2-\alpha}}{2-\alpha}=0,
\end{eqnarray*}
and
\begin{eqnarray*}
\lim_{\epsilon\downarrow0}\limsup_{n\rightarrow\infty}\tr I_3^{n,\epsilon} &\le& \lim_{\epsilon\downarrow0}\limsup_{n\rightarrow\infty} \int_{1> |x|\ge\sqrt\epsilon} \int_0^{\sqrt\epsilon} t^{1-\alpha}\rd t \nu_n(\rd x)\\
&\le& \lim_{\epsilon\downarrow0} C\frac{\epsilon^{1-\alpha/2}}{2-\alpha} = 0.
\end{eqnarray*}
This completes the proof.
\end{proof}

\begin{proof}[Proof of Theorem \ref{thrm: closure}]
Let $M_n$ be the L\'evy measure of $\mu_n$.

Assume that $\mu_n\conw\mu$. By Proposition \ref{prop: ID limits} $\mu$ is infinitely divisible with some L\'evy triplet $(A,M,b)$ such that $b_n\rightarrow b$, $M_n\conv M$ on $\mathbb {\bar R}^d_0$, and \eqref{eq: gaus comp inf div} holds. Combining this with Lemmas \ref{lemma: finite sup} and \ref{lemma: Gaussian comp} gives \eqref{eq: gaussian comp} which implies \eqref{eq: gaussian comp limsup}. It remains to show that there is an extended Rosi\'nski measure $\nu$ such that $\mu=ETS^p_\alpha(A,\nu,b)$ and $\nu_n\conv \nu$ on $\mathbb {\bar R}^d_0$.

By Lemma \ref{lemma: finite sup}, $\sup \nu_n(\mathbb{\bar R}^d)<\infty$. Thus, Proposition \ref{prop: convergent subseq} implies that there is a finite Borel measure $\tilde\nu$ on $\mathbb {\bar R}^d$ and a subsequence $\{\nu_{n_j}\}$ such that  $\nu_{n_j}\conv \tilde\nu$ on $\mathbb {\bar R}^d$. Let $\nu$ be a finite Borel measure on $\mathbb {\bar R}^d$ such that $\nu_{|_{\bar{\mathbb R}^d_0}} = \tilde \nu_{|_{\bar{\mathbb R}^d_0}}$ and $\nu(\{0\})=0$. Clearly $\nu_{n_j}\conv \nu$ on $\mathbb {\bar R}^d_0$. Let $f$ be any continuous non-negative function on $\mathbb {\bar R}^d$ such that there are $\epsilon,K>0$ with $f(x)=0$ whenever $|x|\le\epsilon$ and $f(x)\le K$ for all $x\in\mathbb {\bar R}^d$. For $x\in\bar{\mathbb R}^d$ define
\begin{eqnarray}
g_\alpha(x)=\left\{\begin{array}{ll}
\int_{\epsilon}^{\infty} f(\xi(x)t) t^{-1-\alpha}\frac{e^{-(t/|x|)^p}}{1\wedge|x|^{2-\alpha}}\rd t & \alpha\in(0,2)\\
\int_{\epsilon|x|^{-1}}^\infty f\left(xt\right)t^{-1} \frac{e^{-t^p}}{|x|^2\wedge\left[1+\log^+|x|\right]}\rd t & \alpha=0\\
\int_{\epsilon|x|^{-1}}^\infty f(xt)t^{-1-\alpha} \frac{e^{-t^p}}{|x|^2\wedge1} \rd t & \alpha<0.
\end{array}\right..\label{eq: g alpha}
\end{eqnarray}
We will show that
\begin{eqnarray}\label{eq: conv galpha}
\lim_{j\rightarrow\infty}\int_{\mathbb{\bar R}^d}g_\alpha(x)\nu_{n_j}(\rd x)= \int_{\mathbb{\bar R}^d}g_\alpha(x) \tilde\nu(\rd x).
\end{eqnarray}
Assuming that this holds and observing that $g_\alpha(0)=0$ gives
\begin{eqnarray*}
\int_{\mathbb R^d} f(x) M(\rd x) &=& \lim_{j\rightarrow\infty}\int_{\mathbb R^d} f(x) M_{n_j}(\rd x) = \lim_{j\rightarrow\infty}\int_{\mathbb{\bar R}^d}g_\alpha(x)\nu_{n_j}(\rd x)\\
&=& \int_{\mathbb{\bar R}^d}g_\alpha(x)\tilde\nu(\rd x)
=\int_{\mathbb{\bar R}^d}g_\alpha(x)\nu(\rd x).
\end{eqnarray*}
This implies that $M$ is the L\'evy measure of a $p$-tempered $\alpha$-stable distribution with extended Rosi\'nski measure $\nu$. Since L\'evy measures are unique, this proves that the class $\ets$ is closed under weak convergence. Moreover, since, by Proposition \ref{prop: uniq determ}, $\nu$ is uniquely determined by $M$, $\nu_n\conv \nu$ on $\bar{\mathbb R}^d_0$.

We will now show that \eqref{eq: conv galpha} holds. By Definition \ref{defn: vague conv on R}, it suffices to show that $g_\alpha$ is bounded and continuous. When $\alpha\in(0,2)$ the facts that $\int_\epsilon^\infty t^{-1-\alpha}\rd t<\infty$ and that $f(\xi(x)t)\frac{e^{-(t/|x|)^p}}{1\wedge|x|^{2-\alpha}}$ is uniformly bounded show that $g_\alpha$ is bounded, and by dominated convergence it is continuous on $\mathbb{\bar R}^d$. When $\alpha<0$, $$
1_{[t>\epsilon|x|^{-1}]}f(xt)t^{-1-\alpha} \frac{e^{-t^p}}{|x|^2\wedge1} \le Ke^{-t^p}\left( t^{-1-\alpha}+ t^{1-\alpha}\epsilon^{-2} \right),
$$
which is integrable on $[0,\infty)$. Thus $g_\alpha$ is bounded, and by dominated convergence it is continuous on $\bar{\mathbb R}^d$. 

When $\alpha=0$, by Lemma \ref{lemma: C-sharp suff for vague conv}, it suffices to show that $g_\alpha$ is bounded and continuous only on $\mathbb R^d$. If $|x|\le1$ then
$$
1_{[t\ge \epsilon|x|^{-1}]} f(xt)t^{-1}e^{-t^p}|x|^{-2} \le 1_{[t\ge0]}K \epsilon^{-2}te^{-t^p},
$$
which is integrable with respect to $t$. If $|x|\ge1$ fix $\delta\in(0,|x|)$ and let $x'$ be such that $|x'-x|<\delta$. Then
\begin{eqnarray*}
1_{[t>\epsilon|x'|^{-1}]}f(x't)t^{-1}e^{-t^p}\left[1+\log|x'|\right]^{-1} &\le& 1_{[t\ge \epsilon|x'|^{-1}]}K t^{-1}e^{-t^p}\\
&\le& 1_{[t\ge \epsilon(|x|+\delta)^{-1}]}K t^{-1}e^{-t^p},
\end{eqnarray*}
which is integrable with respect to $t$. Thus, by dominated convergence $g_0$ is continuous on $\mathbb{R}^d$. To show that $g_0(x)$ is bounded, note that when $|x|\le1$ then, as before
\begin{eqnarray*}
g_0(x) \le K \epsilon^{-2}\int_0^\infty te^{-t^p}\rd t<\infty,
\end{eqnarray*}
and when $|x|>1$
\begin{eqnarray*}
g_0(x) &\le& K\left[1+\log|x|\right]^{-1} \int_{\epsilon|x|^{-1}}^{\epsilon e} t^{-1}\rd t + K \int_{\epsilon e}^\infty t^{-1}e^{-t^p} \rd t\\
&=& K + K\int_{\epsilon e}^\infty t^{-1}e^{-t^p} \rd t.
\end{eqnarray*}

Now for the other direction. Let $M$ be the L\'evy measure of $\mu$. Assume that $b_n\rightarrow b$,  \eqref{eq: gaussian comp limsup} holds, and $\nu_n\conv\nu$ on $\bar{\mathbb R}^d_0$. Lemma \ref{lemma: finite sup} implies that $\sup\nu_n(\mathbb{\bar R^d})<\infty$. Thus combining \eqref{eq: gaussian comp limsup} with Lemma \ref{lemma: Gaussian comp} gives \eqref{eq: gaus comp inf div}. To show that $M_n\conv M$ on $\mathbb {\bar R}^d_0$ we will show that every subsequence has a further subsequence that does this. Let $\{n_k\}$ be any increasing sequence in $\mathbb N$. By Proposition \ref{prop: convergent subseq} there is a subsequence $n_{k_j}$ and a finite Borel measure $\tilde\nu$ on $\mathbb {\bar R}^d$ such that $\nu_{n_{k_j}}\conv \tilde\nu$ on $\mathbb {\bar R}^d$. Clearly, $\nu_{|_{\mathbb {\bar R}^d_0}} = \tilde\nu_{|_{\mathbb {\bar R}^d_0}}$. Let $f$ be a continuous nonnegative function on $\mathbb{\bar R}^d$ satisfying the same assumptions as in the other direction, and define $g_\alpha$ by \eqref{eq: g alpha}. Observing that $g_\alpha(0)=0$ gives
\begin{eqnarray*}
\int_{\mathbb R^d}f(x)M_{n_{k_j}}(\rd x) &=& \int_{\mathbb {\bar R}^d} g_\alpha(x) \nu_{n_{k_j}}(\rd x)\\
&\rightarrow& \int_{\mathbb {\bar R}^d} g_\alpha(x) \tilde\nu(\rd x)=\int_{\mathbb {\bar R}^d} g_\alpha(x) \nu(\rd x)= \int_{\mathbb R^d}f(x)M(\rd x),
\end{eqnarray*}
where the convergence follows by arguments similar to the other direction.
\end{proof}

\section{Closure Properties}\label{sec: Closure}

In this section we will show that $\ets$ is, in fact, the smallest class that contains $\ts$ and is closed under weak convergence. In the following let $N(b,A)$ denote the Gaussian distribution with mean vector $b$ and covariance matrix $A$, and let $S_\alpha(\sigma,b)$ denote the $\alpha$-stable distribution with spectral measure $\sigma$ and shift $b$. This means that $N(b,A)=ID(A,0,b)$ and $S_\alpha(\sigma,b) = ID(0_{d\times d},K,b)$ where $K(B) = \int_{\mathbb S^{d-1}} \int_0^\infty 1_B(xt) t^{-1-\alpha} \rd t \sigma(\rd x)$ for $B\in\mathfrak B(\mathbb R^d)$.

\begin{proposition}\label{prop: nec of extended ts}
Fix $\alpha<2$ and $p>0$.\\
1. If $\mu = N(0,A)$ then there is a sequence $\{\mu_n\}$ in $\ts$ such that $\mu_n\conw \mu$.\\
2. If $\alpha\in(0,2)$ and $\mu= S_\alpha(\sigma,0)$ then there is a sequence $\{\mu_n\}$ in $\ts$ such that $\mu_n\conw \mu$.\\
3. The class $\ets$ is the smallest class that contains $TS^p_\alpha$ and is closed under weak convergence. Moreover, this class is closed under taking convolutions.
\end{proposition}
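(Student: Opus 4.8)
The plan is to deduce all three parts from the sufficiency direction of Theorem \ref{thrm: closure}. For parts 1 and 2 I will write down an explicit sequence of extended Rosi\'nski measures $\{\nu_n\}$, together with $A_n=0$ and $b_n=0$, check that each $\mu_n := ETS^p_\alpha(0,\nu_n,0)$ actually lies in $\ts$, and verify the three conditions $\nu_n\conv\nu$ on $\bar{\mathbb R}^d_0$, $b_n\to b$, and \eqref{eq: gaussian comp}. Part 3 then follows by combining these with the convolution representation of $\ets$ distributions recalled in Section \ref{sec: ets}.

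For part 1, write the Gaussian covariance matrix as $A=\sum_{i=1}^d\lambda_i u_iu_i^T$ with $\lambda_i\ge0$ and $\{u_i\}$ orthonormal, set $c_{\alpha,p}=\int_0^\infty t^{1-\alpha}e^{-t^p}\rd t\in(0,\infty)$, and take $\nu_n=c_{\alpha,p}^{-1}\sum_{i=1}^d\lambda_i\delta_{u_i/n}$, with $A_n=0$, $b_n=0$. Each $\nu_n$ is a finite Borel measure on $\bar{\mathbb R}^d$ with no mass at $0$ and none on $\mathbb I^{d-1}$, so by the remark following Proposition \ref{prop: uniq determ} it is the extended Rosi\'nski measure of some $\mu_n\in\ets$; since $A_n=0$ and $\nu_n(\mathbb I^{d-1})=0$, this $\mu_n$ has neither a Gaussian nor an $\alpha$-stable part, hence $\mu_n\in\ts$. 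Because the support of $\nu_n$ shrinks to $0$, $\nu_n\conv 0$ on $\bar{\mathbb R}^d_0$ and $b_n\to 0$. For the Gaussian condition, for each fixed $\epsilon>0$ and all $n$ with $1/n<\sqrt\epsilon$ one computes $H_n^\epsilon=c_{\alpha,p}^{-1}\bigl(\int_0^{\epsilon n}t^{1-\alpha}e^{-t^p}\rd t\bigr)\sum_i\lambda_iu_iu_i^T$, which tends to $A$ as $n\to\infty$; hence $\lim_{\epsilon\downarrow0}\lim_{n\to\infty}(A_n+H_n^\epsilon)=A$. Theorem \ref{thrm: closure} then yields $\mu_n\conw ETS^p_\alpha(A,0,0)=ID(A,0,0)=N(0,A)$.

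For part 2, where $\alpha\in(0,2)$, let $\nu_n$ be the image of $\sigma$ under $u\mapsto nu$, i.e.\ $\nu_n(B)=\int_{\mathbb S^{d-1}}1_B(nu)\sigma(\rd u)$, again with $A_n=0$, $b_n=0$. Then $\nu_n$ is a finite Borel measure on $\mathbb R^d$ with no mass at $0$, hence the extended Rosi\'nski measure of some $\mu_n\in\ts$. Since $nu\to\infty u$ in $\bar{\mathbb R}^d$ and $\sigma$ is finite, $\nu_n\conv\nu^\ast$ on $\bar{\mathbb R}^d_0$, where $\nu^\ast(B)=\int_{\mathbb S^{d-1}}1_B(\infty u)\sigma(\rd u)$ is the extended Rosi\'nski measure of $S_\alpha(\sigma,0)$; moreover $\nu_n(|x|<\epsilon)=0$ for large $n$, so by Remark \ref{remark: no Gauss} there is no Gaussian contribution and $\lim_{\epsilon\downarrow0}\lim_{n\to\infty}(A_n+H_n^\epsilon)=0$; and $b_n\to 0$. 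Theorem \ref{thrm: closure} gives $\mu_n\conw ETS^p_\alpha(0,\nu^\ast,0)=ID(0,K,0)=S_\alpha(\sigma,0)$.

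For part 3, $\ts\subseteq\ets$ is immediate and the first assertion of Theorem \ref{thrm: closure} gives that $\ets$ is closed under weak convergence; closure under convolutions is Remark \ref{remark: closed under convolutions}. For minimality, let $\mathcal C$ be any class closed under weak convergence with $\ts\subseteq\mathcal C$, and let $\mu\in\ets$. By the representation in Section \ref{sec: ets}, $\mu=\mu_G\ast\mu_T\ast\mu_S$ with $\mu_G$ Gaussian, $\mu_T\in\ts$, and $\mu_S$ $\alpha$-stable (taking $\mu_S=\delta_0$ when $\alpha\le0$). Parts 1 and 2 — absorbing the mean of $\mu_G$ into the shifts $b_n$ and using $\delta_0\in\ts$ — produce sequences in $\ts$ converging weakly to $\mu_G$ and to $\mu_S$; convolving these with the constant sequence $\mu_T$, using that $\ts$ is closed under convolutions (Remark \ref{remark: closed under convolutions}) and that convolution is continuous under weak convergence, gives a sequence in $\ts\subseteq\mathcal C$ converging weakly to $\mu$, so $\mu\in\mathcal C$; hence $\ets\subseteq\mathcal C$. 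The one genuinely delicate point is the Gaussian compensator computation in part 1 — choosing the scale $1/n$ of the atoms and the normalizing constant $c_{\alpha,p}$ so that $H_n^\epsilon\to A$ exactly — with the remaining verifications being routine.
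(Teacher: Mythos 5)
Your proof is correct, but it takes a genuinely different route from the paper's for parts 1 and 2. The paper argues directly at the level of characteristic exponents: for part 1 it takes the Rosi\'nski measure $R=N(0,cA)$ with $c=[\int_0^\infty r^{1-\alpha}e^{-r^p}\rd r]^{-1}$, rescales it as $R_n(B)=n^2\int 1_B(xn^{-1})R(\rd x)$, and shows $C_{\mu_n}(z)\to-\tfrac12\langle z,Az\rangle$ by dominated convergence; for part 2 it uses a radially smeared version of $\sigma$, namely $R(B)=\int_{\mathbb S^{d-1}}\int_0^\infty 1_B(ut)e^{-t}t^{-\alpha}\rd t\,\sigma(\rd u)$, rescaled by $R_n(B)=n^{-\alpha}\int 1_B(xn)R(\rd x)$, and a substitution plus dominated convergence to recover the stable exponent. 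This is self-contained and never invokes Theorem \ref{thrm: closure}. You instead feed explicit singular extended Rosi\'nski measures (atoms at $u_i/n$, respectively the image of $\sigma$ under $u\mapsto nu$) into the sufficiency direction of Theorem \ref{thrm: closure}, verifying vague convergence and the compensator condition \eqref{eq: gaussian comp}; your computation of $H_n^\epsilon$ and the choice of the normalizing constant $c_{\alpha,p}$ are correct, and there is no circularity since the proof of Theorem \ref{thrm: closure} does not use this proposition. Your approach is shorter once the main theorem is available and makes transparent exactly which limits produce the Gaussian and stable pieces; the paper's approach is more elementary and would survive even without the full limit theorem. For part 3 both arguments are essentially the same, though you usefully spell out why minimality holds against a competing class that is only assumed closed under weak convergence (the approximating sequence lives entirely in $\ts$ because $\ts$ is closed under convolution).
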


\begin{proof}
First observe that
$$
\lim_{s\rightarrow0} \frac{e^{i\langle x,z\rangle rs}-1-\frac{i\langle x,z\rangle sr}{1+|xr|^2s^2}}{s^2}=-\frac{1}{2}\langle x,z\rangle^2r^2.
$$
Let $R=N(0,cA)$, where $c = \left[\int_0^\infty r^{1-\alpha} e^{-r^p} \rd r\right]^{-1}$. Let $X=(X_1,\dots,X_d)^T\sim R$ and define
$$
R_n(B)=n^2\int_{\mathbb R^d}1_B(xn^{-1})R(\rd x), \qquad B\in\mathfrak B(\mathbb R^d).
$$
By \eqref{eq: integ cond on R}, this is the Rosi\'nski measure of some distribution in $\ts$. If $\mu_n = \ts(R_n,0)$ then
\begin{eqnarray}
C_{\mu_n}(z) &=& 
\int_{\mathbb R^d}\int_0^\infty\left( e^{i\langle x,z\rangle r} -1 -\frac{i\langle x,z\rangle r}{1+|x|^2r^2}\right)r^{-1-\alpha}e^{-r^p}\rd rR_n(\rd x)\nonumber\\
&=& n^2\int_{\mathbb R^d}\int_0^\infty\left( e^{i\langle x,z\rangle r/n} -1 -\frac{i\langle x,z\rangle r/n}{1+|x/n|^2r^2}\right) r^{-1-\alpha} e^{-r^p} \rd r R(\rd x)\nonumber\\
&\rightarrow& -\frac{1}{2}\int_{\mathbb R^d}\langle x,z\rangle^2R(\rd x) \int_0^\infty r^{1-\alpha} e^{-r^p} \rd r \nonumber\\
&=& -\frac{1}{2}\sum_{i=1}^d\sum_{j=1}^d z_i z_j E[X_iX_j]c^{-1}= -\frac{1}{2}\langle z,Az\rangle \nonumber,
\end{eqnarray}
where the third line follows by dominated convergence. For the second part let 
$$
R(B)=\int_{\mathbb S^{d-1}}\int_0^\infty 1_B(ut)e^{-t}t^{-\alpha}\rd t\sigma(\rd u), \qquad B\in\mathfrak B(\mathbb R^d),
$$
and note that
$$
\sigma(B) = \int_{\mathbb R^d} 1_B\left(\frac{x}{|x|}\right)|x|^\alpha R(\rd x), \qquad B\in\mathfrak B(\mathbb S^{d-1}).
$$
Let
$$
R_n(B)= n^{-\alpha} \int_{\mathbb R^d} 1_B(xn)R(\rd x), \qquad B\in\mathfrak B(\mathbb R^d).
$$
By \eqref{eq: integ cond on R}, this is the Rosi\'nski measure of some distribution in $\ts$. If $\mu_n= \ts(R_n,0)$ then
\begin{eqnarray}
C_{\mu_n}(z) &=& \int_{\mathbb R^d}\int_0^\infty\left( e^{i\langle x,z\rangle r} -1 -\frac{i\langle x,z\rangle r}{1+|x|^2r^2}\right) r^{-1-\alpha}e^{-r^p}\rd r R_n(\rd x) \nonumber\\
&=& n^{-\alpha}\int_{\mathbb R^d}\int_0^\infty\left( e^{i\langle x,z\rangle r n} -1 -\frac{i\langle x,z\rangle r n}{1+|xn|^2r^2}\right) r^{-1-\alpha} e^{-r^p} \rd r R(\rd x)\nonumber\\
&=& \int_{\mathbb R^d}\int_0^\infty\left( e^{i\langle x,z\rangle t/|x|} -1 -\frac{i\langle x,z\rangle t/|x|}{1+t^2}\right) t^{-1-\alpha} e^{-(t|x|^{-1}n^{-1})^p} \rd t |x|^\alpha R(\rd x)\nonumber\\
&\rightarrow& \int_{\mathbb R^d}\int_0^\infty\left( e^{i\langle x,z\rangle t/|x|}-1-\frac{i\langle x,z\rangle t/|x|}{1+t^2}\right) t^{-1-\alpha} \rd t |x|^\alpha R(\rd x)\nonumber\\
&=& \int_{\mathbb S^{d-1}}\int_0^\infty\left( e^{i\langle u,z\rangle t}-1-\frac{i\langle u,z\rangle t}{1+t^2}\right) t^{-1-\alpha} \rd t \sigma(\rd u)\nonumber,
\end{eqnarray}
where the third line follows by the substitution $t=rn|x|$ and the fourth by dominated convergence. The third part is an immediate consequence of the first two and Remark \ref{remark: closed under convolutions}.
\end{proof}

\begin{definition}\label{defn: elem ts vectors}
For $\alpha<2$ and $p>0$, a random vector is called an \textbf{elementary $\mathbf p$-tempered $\alpha$-stable random vector} on $\mathbb R^d$ if it can be written as $Ux$, where $x\in\mathbb R^d_0$ is a nonrandom vector and $U\sim ID(0, M,b)$ is an infinitely divisible random variable on $\mathbb R$ with $b\in\mathbb R$ and $M(\rd t)= c1_{[t>0]}t^{-1-\alpha}e^{-t^p}\rd t$, for some $c>0$.
\end{definition}

By \eqref{eq: inf div char func}, for $\lambda\in\mathbb R$, we have
\begin{eqnarray}
\mathrm Ee^{i\lambda U} &=& \exp\left\{ c\int_0^\infty \left( e^{i\lambda t} -1-\frac{i\lambda t}{1+t^2}\right)t^{-1-\alpha}e^{-t^p} \rd t + i\lambda b \right\}.
\end{eqnarray}
Thus for $z\in\mathbb R^d$
\begin{eqnarray*}
\mathrm Ee^{i\langle z,Ux\rangle} &=& \exp\left\{ c\int_0^\infty \left( e^{i\langle z,x\rangle t} -1-\frac{i\langle z,x\rangle t}{1+t^2} \right) t^{-1-\alpha}e^{-t^p} \rd t + i\langle z,xb\rangle \right\} \nonumber \\
&=& \exp\left\{ \int_{\mathbb R^d} \int_0^\infty \left( e^{i\langle y,z\rangle t} -1-\frac{i\langle y,z\rangle t}{1+t^2} \right)t^{-1-\alpha}e^{-t^p} \rd tR(\rd y) + i\langle z,xb\rangle \right\},
\end{eqnarray*}
where $R(\rd y)=c\delta_{x}(\rd y)$. Thus, a random vector is the finite sum of elementary $p$-tempered $\alpha$-stable random vectors if and only if its distribution is an element of $TS^p_\alpha$ with Rosi\'nski measure $R$ having a finite support.

\begin{theorem}\label{thrm: approx by elem}
Fix $\alpha<2$ and $p>0$. The class $ETS^p_\alpha$ is the smallest class of distributions closed under convolution and weak convergence and containing all elementary $p$-tempered
$\alpha$-stable distributions. In fact, $\mu\in ETS^p_\alpha$ if and only if there are probability measures $\mu_1,\mu_2,\dots$ with $\mu_n\conw\mu$ such that each $\mu_n$ is the distribution
of the sum of a finite number of independent elementary $p$-tempered $\alpha$-stable random vectors.
\end{theorem}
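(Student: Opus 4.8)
The plan is to reduce the theorem to a single new statement, which I will call Step~1: every distribution in $\ts$ is a weak limit of distributions of finite sums of independent elementary $p$-tempered $\alpha$-stable random vectors. Write $\mathcal E$ for the set of distributions of finite sums of independent elementary $p$-tempered $\alpha$-stable random vectors; by the characterization recorded just before the theorem, $\mathcal E = \{\ts(R,b): R \text{ has finite support}\}$, and concatenating the finite lists of defining vectors shows $\mathcal E$ is closed under convolution. Let $\overline{\mathcal E}^{\,w}$ be its closure under weak convergence. Since weak convergence on $\mathbb R^d$ is metrizable and convolution is continuous for it, $\overline{\mathcal E}^{\,w}$ is again closed under convolution, and it is weakly closed by construction. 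Because $\mathcal E \subseteq \ts \subseteq \ets$ and, by Theorem~\ref{thrm: closure} and Remark~\ref{remark: closed under convolutions}, $\ets$ is closed under both convolution and weak limits and contains every elementary $p$-tempered $\alpha$-stable distribution, we already have $\overline{\mathcal E}^{\,w} \subseteq \ets$; this is the ``if'' half of the last sentence of the theorem. Conversely, any class closed under convolution and weak convergence that contains all elementary $p$-tempered $\alpha$-stable distributions must contain $\mathcal E$ and hence $\overline{\mathcal E}^{\,w}$; so once Step~1 gives $\ets \subseteq \overline{\mathcal E}^{\,w}$ we obtain both $\ets = \overline{\mathcal E}^{\,w}$, which is the minimality assertion, and the ``only if'' half of the last sentence.

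To deduce $\ets \subseteq \overline{\mathcal E}^{\,w}$ from Step~1, use the remark following \eqref{eq: levy of ETS}: each $\mu \in \ets$ is a convolution of a Gaussian distribution, an element $\mu_T \in \ts$, and, when $\alpha \in (0,2)$, an $\alpha$-stable distribution; absorbing the mean of the Gaussian factor and the shift of the stable factor into the free shift of $\mu_T$, we may take these factors to be $N(0,A)$ and $S_\alpha(\sigma,0)$. By Proposition~\ref{prop: nec of extended ts}, both $N(0,A)$ and $S_\alpha(\sigma,0)$ are weak limits of sequences in $\ts$, hence, since $\overline{\mathcal E}^{\,w} \supseteq \ts$ by Step~1 and is weakly closed, both lie in $\overline{\mathcal E}^{\,w}$; and $\mu_T \in \ts \subseteq \overline{\mathcal E}^{\,w}$. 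As $\overline{\mathcal E}^{\,w}$ is closed under convolution, $\mu \in \overline{\mathcal E}^{\,w}$ (when $\alpha \le 0$ there is no stable factor and one convolves only the first two).

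It thus remains to prove Step~1. Fix $\mu = \ts(R,b)$ and let $\nu$ be its extended Rosi\'nski measure, a finite Borel measure on $\bar{\mathbb R}^d$ with $\nu(\{0\}) = 0$ and, since $\mu \in \ts$, $\nu(\mathbb I^{d-1}) = 0$. Choose $\delta_n \downarrow 0$ and $N_n \uparrow \infty$, partition $\{\delta_n \le |x| \le N_n\}$ into finitely many Borel cells of Euclidean diameter at most $1/n$, pick a point in each, and let $\nu_n$ place at that point the $\nu$-mass of the cell; then $\mu_n := \ets(0,\nu_n,b)$ has no Gaussian part and a finitely supported Rosi\'nski measure, so $\mu_n \in \mathcal E$ (if necessary, adjoin at most $d$ further atoms of mass $1/n$ so that the atoms of $\nu_n$ span $\mathbb R^d$ and $b$ is realizable as the shift; since the added mass tends to $0$ this affects neither of the limits below). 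It remains to verify the three conditions of Theorem~\ref{thrm: closure} for $\{\mu_n\}$. The condition $b_n = b \to b$ is immediate. The condition $\nu_n \conv \nu$ on $\bar{\mathbb R}^d_0$ is a routine Riemann-sum estimate: the test functions of Definition~\ref{defn: vague conv on R0} vanish near $0$ and are uniformly continuous on the compact space $\bar{\mathbb R}^d$, $\nu(\mathbb I^{d-1}) = 0$, and finiteness of $\nu$ makes the discarded tail $\{|x| > N_n\}$ negligible. The Gaussian condition \eqref{eq: gaussian comp}, which here reads $\lim_{\epsilon \downarrow 0}\lim_n H_n^\epsilon = 0$, is by Remark~\ref{remark: no Gauss} equivalent to \eqref{eq: pure closure gauss alt}; and it holds because a cell of Euclidean diameter at most $1/n$ whose chosen point lies in $\{|x| < \epsilon\}$ is contained in $\{|x| < \epsilon + 1/n\}$, whence $\nu_n(|x| < \epsilon) \le \nu(|x| < \epsilon + 1/n)$ and so $\lim_{\epsilon \downarrow 0}\limsup_n \nu_n(|x| < \epsilon) \le \lim_{\epsilon \downarrow 0}\nu(|x| \le \epsilon) = \nu(\{0\}) = 0$ by continuity from above. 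Theorem~\ref{thrm: closure} then gives $\mu_n \conw \ets(0,\nu,b) = \mu$, which is Step~1.

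The main obstacle is the Gaussian condition in Step~1: one has to be sure that discretizing $\nu$ by point masses near the origin does not, through the limit procedure of Theorem~\ref{thrm: closure}, generate a spurious Gaussian component, and it is precisely the vanishing diameter $1/n$ together with $\nu(\{0\}) = 0$ that prevents this. Everything else is comparatively soft: the convolution-closure of $\overline{\mathcal E}^{\,w}$ uses only continuity of convolution under weak convergence, the splitting of an $\ets$ distribution into Gaussian, $\ts$, and $\alpha$-stable factors is the remark following \eqref{eq: levy of ETS}, and the vague-convergence bookkeeping on the compactification is governed by Definition~\ref{defn: vague conv on R0}, with Lemma~\ref{lemma: C-sharp suff for vague conv} available as a shortcut since here every $\nu_n$ and $\nu$ vanish on $\mathbb I^{d-1}$. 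One could instead bypass the convolution decomposition and construct a single finitely supported sequence $\nu_n$ for a general $\mu \in \ets$ directly, using point masses collapsing to the origin to produce the Gaussian part, as in the first part of Proposition~\ref{prop: nec of extended ts}, and point masses escaping to infinity to produce the $\alpha$-stable part, as in its second part; but keeping these three features apart makes the verifications cleaner.
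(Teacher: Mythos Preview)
Your proof is correct and follows essentially the same route as the paper's: reduce via Proposition~\ref{prop: nec of extended ts} to approximating an arbitrary $\ts$ distribution, then discretize its extended Rosi\'nski measure by finitely supported $\nu_n$ and invoke Theorem~\ref{thrm: closure} together with Remark~\ref{remark: no Gauss} to get $\mu_n\conw\mu$. The only differences are in execution---you build $\nu_n$ explicitly and bound $\nu_n(|x|<\epsilon)$ by the cell-diameter estimate, whereas the paper cites a general approximation result and the Portmanteau theorem---and you are (rightly) more careful than the paper about adjoining atoms so that the shift $b$ is actually realizable by a finite sum of elementary vectors.
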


For the case when $p=1$ and $\alpha\in\{-1,0\}$ this was shown in Theorem F of \cite{Barndorff-Nielsen:Maejima:Sato:2006}. There the result followed from the properties of a certain integral representation. A similar representation for the case $\alpha<2$ and $p>0$ is given in \cite{Maejima:Nakahara:2009}.  However, in the case when $\alpha\in(0,2)$ the properties of the representation are different. Thus it appears that a proof analogous to that of \cite{Barndorff-Nielsen:Maejima:Sato:2006} can only be constructed when $\alpha\le0$. Instead, we base our proof on Theorem \ref{thrm: closure}.

\begin{proof}
In light of Proposition \ref{prop: nec of extended ts}, it suffices to show that we can approximate any distribution in $TS^p_\alpha$. Let $\mu=\ts(R,b)$ and let $\nu$ be its extended Rosi\'nski measure. Let  $(\nu_n)$ be any  sequence of finite measures on $\bar{\mathbb R}^d$ with a finite support such that $\nu_n(\{0\})=0$, $\nu_n(\mathbb I^{d-1})=0$, and $\nu_n\conv \nu$ on $\bar{\mathbb R}^d$ (such measures exist by e.g.\ Theorem 7.7.3 in \cite{Bauer:1981}). 
Let $\mu_n=\ets(0_{d\times d},\nu_n,b)$. Note that by the Portmanteau Theorem (Theorem 3.12 in \cite{Resnick:1987})
\begin{eqnarray*}
\lim_{\epsilon \downarrow 0}\limsup_{n\rightarrow\infty} \nu_n\left( |x| < \epsilon \right) \le \lim_{\epsilon \downarrow 0} \nu\left( |x| \le \epsilon \right)=0,
\end{eqnarray*}
where the final equality follows from the fact that $\nu$ is a finite measure that places no mass at $0$. Thus, $\mu_n\conw\mu$ by \eqref{eq: pure closure gauss alt} and Theorem \ref{thrm: closure}.
\end{proof}

Since all elementary $p$-tempered $\alpha$-stable distributions are proper, we immediately get the following.

\begin{corollary}
$ETS^p_\alpha$ is the smallest class of distributions closed under convolution and weak convergence and containing all proper $p$-tempered $\alpha$-stable distributions.
\end{corollary}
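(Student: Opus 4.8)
The plan is to reduce the final corollary to Theorem \ref{thrm: approx by elem} by a density argument: since every proper $p$-tempered $\alpha$-stable distribution lies in $\ts\subseteq\ets$, and $\ets$ is closed under convolution and weak convergence, the smallest class $\mathcal C$ closed under these operations and containing all proper $\ts$ distributions is automatically contained in $\ets$. Conversely, one must show $\ets\subseteq\mathcal C$. The key observation is that every elementary $p$-tempered $\alpha$-stable distribution is proper: indeed, for such a distribution the Rosi\'nski measure is $R=c\delta_x$ with $x\neq0$, so the corresponding tempering function is $q(r^p,u)=c'e^{-(r/|x|)^p}$ evaluated appropriately, and in the representation \eqref{eq:m} one checks that $\lim_{r\downarrow0}q(r,u)=1$ after normalizing $\sigma$; this is precisely the definition of proper. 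Hence the class $\mathcal C$ contains all elementary $p$-tempered $\alpha$-stable distributions, so by Theorem \ref{thrm: approx by elem} it contains all of $\ets$.

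More concretely, I would first verify the properness claim for elementary distributions. From Definition \ref{defn: elem ts vectors}, an elementary distribution on $\mathbb R^d$ is the law of $Ux$ with $U$ one-dimensional having L\'evy measure $c1_{[t>0]}t^{-1-\alpha}e^{-t^p}\rd t$. Assuming $x\neq0$ and (say) $x$ points in direction $u_0\in\mathbb S^{d-1}$ with $|x|=\rho$, the L\'evy measure of $Ux$ is supported on the ray $\mathbb R_+ u_0$, and a change of variables $r=t\rho$ shows it equals $\int_0^\infty 1_B(ru_0)\,c\rho^\alpha e^{-(r/\rho)^p} r^{-1-\alpha}\rd r$. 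This is of the form \eqref{eq:m} with $\sigma=c\rho^\alpha\delta_{u_0}$ and $q(s,u_0)=e^{-s/\rho^p}$, which is completely monotone in $s$ and satisfies $q(s,u_0)\to1$ as $s\downarrow0$. Hence the distribution is a proper $p$-tempered $\alpha$-stable distribution. (One should note that finite sums of such distributions are again proper, since $\lim_{r\downarrow0}$ of each summand's $q$ is $1$ and properness is preserved under the convolutions appearing in Remark \ref{remark: closed under convolutions}; but this is not even needed, as we only require the class to contain the individual elementary distributions.)

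With properness established, the argument closes quickly. Let $\mathcal C$ be the smallest class closed under convolution and weak convergence containing all proper $p$-tempered $\alpha$-stable distributions. Since $\mathcal C$ contains all elementary $p$-tempered $\alpha$-stable distributions and is closed under convolution and weak convergence, Theorem \ref{thrm: approx by elem} gives $\ets\subseteq\mathcal C$. For the reverse inclusion, note that every proper $\ts$ distribution is in $\ts\subseteq\ets$, and by Remark \ref{remark: closed under convolutions} and Theorem \ref{thrm: closure} the class $\ets$ is closed under both convolution and weak convergence; by minimality of $\mathcal C$ we get $\mathcal C\subseteq\ets$. Hence $\mathcal C=\ets$.

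The only genuine step here is the properness verification, and even that is essentially bookkeeping with the definitions --- the one mild subtlety is handling the normalization constant $c$ correctly so that the limit $\lim_{r\downarrow0}q(r,u)=1$ comes out to exactly $1$ rather than some other positive constant, but this is absorbed into the choice of $\sigma$ as above. No genuine obstacle is expected; the corollary is a direct packaging of Theorem \ref{thrm: approx by elem} together with the observation that the approximating building blocks are themselves proper, which is why the excerpt introduces it with ``we immediately get.''
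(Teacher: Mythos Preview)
Your proposal is correct and follows exactly the paper's approach: the paper simply states ``Since all elementary $p$-tempered $\alpha$-stable distributions are proper, we immediately get the following,'' and you have merely unpacked that properness claim and the sandwich $\ets\subseteq\mathcal C\subseteq\ets$ in detail. The only thing the paper leaves implicit that you spell out is the change of variables showing $q(s,u_0)=e^{-s/\rho^p}$, which is indeed straightforward.
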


\section*{Acknowledgements}

The author wishes to thank his advisor Professor Gennady Samorodnitsky for many helpful discussions. This work was supported, in part, by funds provided by the University of North Carolina at Charlotte.

\bibliographystyle{plain}
\bibliography{GrabchakLimitsTS}
 \end{document}